\newcommand{\F}{\mathbb{F}}
\newtheoremstyle{dotless}{}{}{\itshape}{}{\bfseries}{}{ }{} 
\theoremstyle{dotless}
\newtheorem{theorem}{Theorem}[section]
\newtheorem{lemma}[theorem]{Lemma}
\newtheorem{prop}[theorem]{Proposition}
\newtheorem{conj}[theorem]{Conjecture}
\newtheorem{defn}[theorem]{Definition}
\newtheorem{remark}[theorem]{Remark}
\newtheorem{exam}[theorem]{Example}
\begin{document}

\pagenumbering{arabic} \setcounter{page}{1}

\title{Breaking the 6/5 threshold for sums and products modulo a prime}

\author{G. Shakan, I. D. Shkredov}

\address{Department of Mathematics \\
University of Illinois \\
Urbana, IL 61801, U.S.A.}
\email{shakan2@illinois.edu}

\address{ Steklov Mathematical Institute\\
ul. Gubkina, 8, Moscow, Russia, 119991 \\
and
\\
IITP RAS  \\
Bolshoy Karetny per. 19, Moscow, Russia, 127994\\
and 
\\
MIPT \\ 
Institutskii per. 9, Dolgoprudnii, Russia, 141701\\}
\email{ilya.shkredov@gmail.com}

\begin{abstract}
Let $A \subset \mathbb{F}_p$ of size at most $p^{3/5}$. We show $$|A+A| + |AA| \gtrsim |A|^{6/5 + c},$$ for $c = 4/305$. Our main tools are the cartesian product point--line incidence theorem of Stevens and de Zeeuw and the theory of higher order energies developed by the second author.
\end{abstract}

\maketitle
\providecommand{\keywords}[1]{\textbf{\textit{Keywords---}} #1}
\section{Introduction}

For a finite subset $A$ of an abelian group, we define the sumset and product set via $$A+A :=\{a+b : a \in A, b \in B\}, \ \ \ AA := \{ab : a\in A , b\in B\}.$$
The Erd\H{o}s--Szemer\'edi sum--product conjecture \cite{ES} states the following. We adopt Vinogradov's notation and say $a \ll b$ when $a =O(b)$. 

\begin{conj}\label{esconj}\cite{ES} Fix $\delta < 1$. For any finite $A \subset \mathbb{Z}$, one has
 $$|A+A| + |A A| \gg |A|^{1 + \delta}.$$
\end{conj}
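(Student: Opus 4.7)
\medskip
\noindent\textbf{Proof proposal.}
This statement is the central open conjecture of the field, so a complete proof is out of reach; my plan is to outline the standard incidence attack toward it and to locate the obstruction.

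The natural first step is Elekes's argument. I would form the point set $P = (A+A) \times (AA) \subset \R^2$ together with the line set $L = \{y = a(x-b) : a,b \in A\}$, of sizes $|P| = |A+A||AA|$ and $|L| = |A|^2$. Each line indexed by $(a,b)$ passes through the $|A|$ points $(b+c, ac)$ with $c \in A$, so $I(P,L) \geq |A|^3$. The Szemer\'edi--Trotter bound $I(P,L) \ll (|P||L|)^{2/3} + |P| + |L|$ then forces $|A+A||AA| \gg |A|^{5/2}$, i.e.\ $|A+A|+|AA| \gg |A|^{5/4}$.

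To push beyond this, I would follow Solymosi and bound the multiplicative energy $E^\times(A) = |\{(a,b,c,d) \in A^4 : ab = cd\}|$ by clustering $A \times A$ along lines through the origin, dyadically decomposing the sumsets of neighbouring slope groups, and applying Szemer\'edi--Trotter to the resulting point--line system. This yields $E^\times(A) \ll |A+A|^2 \log |A|$, which combined with the Cauchy--Schwarz lower bound $E^\times(A) \geq |A|^4/|AA|$ gives the $|A|^{4/3}$ threshold. Further numerical gains would come from replacing $E^\times$ by higher-order energies such as $E_3(A) = \sum_x r_{A-A}(x)^3$ and exploiting structural dichotomies on the set of popular ratios --- this is precisely the machinery the present paper refines to cross $6/5$ in the prime-field setting.

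The main obstacle, and the reason the conjecture has resisted every attack since 1983, is that all of these routes convert the problem into an incidence or energy inequality in $\R^2$ whose sharp form saturates well short of $\delta = 1$. Reaching the full conjecture will require either a genuinely new incidence bound sensitive to the arithmetic structure of $A$ (not merely its cardinality), or a quantitative inverse theorem forcing any $A$ with both $|A+A|$ and $|AA|$ small to sit inside a subring-like configuration --- which over $\ZZ$ would then collapse to an arithmetic or geometric progression and finish the argument.
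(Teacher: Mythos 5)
You correctly recognize that the statement is Conjecture~\ref{esconj}, the open Erd\H{o}s--Szemer\'edi conjecture, which the paper cites only as motivation and does not (and could not) prove; there is no proof in the paper to compare against. Your survey of the Elekes $|A|^{5/4}$ bound, the Solymosi $|A|^{4/3}$ bound via multiplicative energy, and the role of higher-order energies is an accurate account of the state of the art and of where the present paper sits relative to it.
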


The current state of the art progress towards Conjecture~\ref{esconj} is due to the first author \cite{Sha} building upon the works of \cite{El, So, BW, KS, KS2,Shr3, RSS}. The underlying idea in these works is a set with few sums and products to create point--line incidence structure with many incidences. This is in tension with point--line incidence bounds, for instance the well--known Szemer\'edi--Trotter theorem \cite{ST}, valid over $\mathbb{R}$.

It is expected that Conjecture~\ref{esconj} is true over $\mathbb{F}_p$ as long as $|A|$ is not too large. The first general sum--product bound in $\mathbb{F}_p$ was due Bourgain, Katz, and Tao \cite{BKT} in which they also gave the first general point--line incidence bound in $\mathbb{F}_p^2$. Since then, sum--product estimates have found applications to RIP matrices \cite{Bo, Bo2}, exponential sums \cite{BGK,Bo3}, and finite field Fourier restriction \cite{RuSh} to name just a few. Their sum--product estimate was quantitatively improved in \cite{Ga, KaSh,BG,Ru1}. Roche--Newton, Rudnev, and the second author \cite{RRS} obtained a big quantitative improvement coming from the breakthrough point--plane incidence bound of Rudnev \cite{Ru}.
\begin{theorem}\cite{RRS}\label{RRS} Let $A \subset \mathbb{F}_p$ of size at most $p^{5/8}$. Then $$|A+A| + |AA| \gg |A|^{6/5}.$$
\end{theorem}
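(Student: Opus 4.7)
The plan is to proceed by contradiction: suppose $|A+A|, |AA| \leq K|A|$ for some parameter $K$, with the aim of deriving $K \gg |A|^{1/5}$ under the hypothesis $|A| \leq p^{5/8}$. This would immediately yield $|A+A|+|AA| \gg |A|^{6/5}$ as desired. The whole strength of the approach will come from Rudnev's point--plane incidence theorem, which is the crucial new ingredient over $\mathbb{F}_p$ compared with the earlier bounds cited above.

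First I would record the easy consequences of the assumption $|A+A| \leq K|A|$: by the Pl\"unnecke--Ruzsa inequality all iterated sum/difference sets $kA-\ell A$ have size $\leq K^{k+\ell}|A|$, and by Cauchy--Schwarz applied to the representation functions of $A+A$ and $AA$, the additive and multiplicative energies are large, $E^+(A) \geq |A|^3/K$ and $E^\times(A) = |\{(a,b,c,d)\in A^4 : ab=cd\}| \geq |A|^3/K$.

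Next, and this is the heart of the argument, I would prove an upper bound on $E^\times(A)$ of the shape $E^\times(A) \ll K^{3/2} |A|^{5/2}$ (up to logs, and provided $|A| \leq p^{5/8}$). The route is to convert $E^\times(A)$ into an incidence count: solutions to $ab=cd$ are rewritten as $a/c = d/b$, and after a dyadic pigeonhole on popular ratios $\lambda \in A/A$ one is led to counting collinear/concurrent configurations in a grid inside $\mathbb{F}_p^2$. Following Rudnev, triples of such configurations are encoded as incidences between points in $\mathbb{F}_p^3$ and a family of planes, and one invokes the bound $I(P,\Pi) \ll |P|^{1/2}|\Pi| + k|\Pi|$, valid when $|P| \leq p^2$, where $k$ is the maximum number of collinear points. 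The quantity $k$ is in turn controlled by the sumset sizes (hence by $K$) via a standard Katz--Koester argument. Combining the desired upper bound $E^\times(A) \ll K^{3/2}|A|^{5/2}$ with the lower bound $E^\times(A) \geq |A|^3/K$ forces $K^{5/2} \gg |A|^{1/2}$, i.e., $K \gg |A|^{1/5}$.

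The main obstacle is executing the upper bound on $E^\times(A)$ with the correct exponent $3/2$ on $K$. This is where Rudnev's theorem, the Cauchy--Schwarz symmetrisations, and the Pl\"unnecke--Ruzsa losses all interact: every application of Cauchy--Schwarz costs a factor of $K$, and one must be careful to balance them so the final exponent on $K$ does not exceed $3/2$. Separately, the threshold $|A| \leq p^{5/8}$ must be traced: it emerges from requiring both the point-count hypothesis $|P| \leq p^2$ for Rudnev's theorem and that the main term of that theorem dominate over the $p$-dependent trivial term, for the specific point and plane counts appearing after the above reductions. Degenerate collinear configurations in the underlying Cartesian-product structure must also be isolated and absorbed using the small-sumset hypothesis.
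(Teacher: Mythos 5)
Your proposal is a plausible outline of the \emph{original} Roche--Newton--Rudnev--Shkredov argument, and it is a genuinely different route from the one this paper uses to recover the statement. The present paper does not reproduce the RRS proof; instead it re-derives the $6/5$ bound (for $|A|\le p^{3/5}$, and up to logarithms) by combining Proposition~\ref{energyreduction}, which bounds the fourth-order quantity $d_4^+(A)\lesssim |AA|^2|A|^{-2}$ via the Stevens--de Zeeuw point--\emph{line} incidence theorem \cite{SZ} applied to the Cartesian product $D_\tau\times AA$ and lines $v=u/r-b$, with the Cauchy--Schwarz inequality \eqref{CS4}, yielding $|A|^6\lesssim|A+A|^3|AA|^2$. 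You instead follow RRS through Rudnev's point--\emph{plane} incidence theorem \cite{Ru}, bounding multiplicative energy $E^\times(A)$ after rewriting $ab=cd$ as $a/c=d/b$, encoding triples of these as point--plane incidences in $\F_p^3$, and controlling the maximum collinearity parameter $k$ by the small sumset hypothesis via Katz--Koester. Both routes are valid and both pivot on an energy inequality of the same shape $|A|^6\ll M^5$; the principal trade-off is that the Rudnev route gives the stated $p^{5/8}$ threshold and no log losses, while the Stevens--de Zeeuw route gives $p^{3/5}$ with log losses but exposes the quantity $d_4^+(A)$, which is precisely the hook that lets this paper push past $6/5$ in Theorem~\ref{main}.

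Two cautions. First, you state the target bound $E^\times(A)\ll K^{3/2}|A|^{5/2}$ without carrying out the dyadic pigeonhole, the Cauchy--Schwarz symmetrisations, the collinearity estimate, and the isolation of degenerate concurrent configurations; you rightly flag this as ``the main obstacle,'' but it is exactly where all the content lives, and the exponent $3/2$ on $K$ is delicate (each Cauchy--Schwarz costs a power of $K$ and the balance must be checked). Second, the $p^{5/8}$ threshold is not a consequence of $|P|\le p^2$ alone --- as you note, it emerges from requiring the main term of Rudnev's bound to dominate the $p$-dependent term for the \emph{specific} point and plane families that appear after the reductions, and that comparison must be carried out, not merely announced. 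As written, the proposal is a correct roadmap rather than a proof.
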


We are able to improve upon Theorem~\ref{RRS}. We say $b \gtrsim a$ if $a = O(b \log^c |A|)$ for some $c >0$.

\begin{theorem}[Sum--product bound]\label{main} Let $A \subset \mathbb{F}_p$ of size at most $p^{3/5}$. Then
$$|A \pm A|^{35} |AA|^{26} \gtrsim |A|^{74}.$$ In particular,  $$|A\pm A| + |AA| \gtrsim |A|^{6/5 + 4/305}.$$
\end{theorem}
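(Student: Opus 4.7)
The plan follows the Elekes paradigm: convert the hypothesis that $|A\pm A|$ and $|AA|$ are both small into a point-line configuration in $\F_p^2$ with many incidences, then extract a contradiction using the Stevens--de Zeeuw Cartesian-product incidence bound. What distinguishes this argument from the classical $5/4$-type proofs is that incidences are counted not against the trivial $|A|^3$ but against a carefully chosen \emph{higher-order} energy, exploiting Shkredov's theory of the $E_k$ energies (and the associated quantities such as $d_+(A)$ and $d_\times(A)$) to extract an additional exponent boost over the $6/5$ bound of Theorem~\ref{RRS}.

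As a first concrete step I would set up the configuration $P = X \times Y \subseteq (A\pm A) \times (AA)$ with line family $L = \{\ell_{a,b} : y = a(x-b),\ a,b \in A\}$. For each triple $(a,b,c)\in A^3$, the point $(c+b,\, ac)$ lies in $P$ and on $\ell_{a,b}$, producing $|A|^3$ incidences in the crudest form; refining this count according to the multiplicities of the representations $x = c+b$ and $y = ac$ replaces $|A|^3$ by (a factor of) $E_3^+(A) = \sum_x r_{A-A}(x)^3$ or a corresponding mixed energy. The Stevens--de Zeeuw theorem, which for a Cartesian grid $X \times Y$ and a set of $N$ lines gives a bound of the shape $I(P, L) \ll (|X||Y|)^{3/4} N^{2/3}$ plus lower-order terms---valid precisely in the regime $|A| \leq p^{3/5}$ that appears in the theorem---then supplies the matching upper bound. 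Dyadic pigeonholing on the representation functions of $A\pm A$ and $AA$ is required at this stage to line up both inequalities on the same rich substructure.

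Comparing the two bounds yields an inequality of the shape $|A\pm A|^{a}\,|AA|^{b}\cdot E_3^+(A)^{c} \gtrsim |A|^{d}$. Here Shkredov's higher-energy machinery enters: after popularity pruning and a Pl\"unnecke--Ruzsa step, smallness of $E_3^+(A)$ (or its multiplicative cousin) forces a quantitative lower bound on $|A\pm A|$ in terms of $|AA|$, allowing one to eliminate the $E_3^+(A)$ factor. The final exponents $35$, $26$, and $74$ then emerge as the solution of the small linear system obtained by equating exponents in the incidence bound and the energy-to-sumset conversion; note $35+26 = 61$ and $74/61 = 6/5 + 4/305$, consistent with the claimed improvement.

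The main obstacle I anticipate is the precise bookkeeping in the higher-energy step. A naive Cauchy--Schwarz substitution only recovers the $6/5$ exponent of Theorem~\ref{RRS}; squeezing out the extra $4/305$ requires a careful dyadic popularity argument, repeated passage to large refined subsets with controlled doubling, and verification that the Stevens--de Zeeuw non-degeneracy hypothesis (and hence the $|A| \leq p^{3/5}$ threshold) survives every such refinement. Keeping the exponent arithmetic clean enough to yield the explicit inequality $|A\pm A|^{35}|AA|^{26}\gtrsim |A|^{74}$, rather than some weaker combination absorbed into polylogarithmic losses, is the most delicate part of the proof.
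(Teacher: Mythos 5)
You have correctly identified the two main ingredients (the Stevens--de Zeeuw Cartesian incidence bound and Shkredov's higher-energy machinery), the $p^{3/5}$ threshold, and the exponent arithmetic ($35+26=61$, $74/61 = 6/5+4/305$). However, the structure you propose does not match what actually works, and the step in the middle is where the argument would break. You set up a single Elekes-type incidence count on $(A\pm A)\times(AA)$ with lines $y=a(x-b)$ and claim to ``refine'' $|A|^3$ incidences by a factor involving $E_3^+(A)$, then ``eliminate'' that factor by Pl\"unnecke--Ruzsa. This is not a well-defined trade: restricting to popular representations of $x=c+b$ or $y=ac$ shrinks the point or line set rather than boosting the incidence count to $E_3^+(A)$, and there is no clean Pl\"unnecke step that removes an $E_3^+$ factor while leaving only $|A\pm A|$ and $|AA|$. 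More fundamentally, the third-order energy $E_3^+$ is the wrong object here: the exponent $3/4$ on $|\mathcal{L}|$ in Stevens--de Zeeuw makes it naturally a \emph{fourth}-moment statement, and the key quantity is not even $E_4^+(A)$ but $d_4^+(A) = \sup_B E_4^+(A,B)/(|A||B|^3)$, where the supremum over auxiliary sets $B$ is essential.

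The paper's argument is cleanly decoupled into two independent steps, and your outline does not reproduce either one. Step one (Proposition~\ref{energyreduction}) applies Stevens--de Zeeuw to the point set $D_\tau \times AA$ (where $D_\tau$ is a popular level set of $r_{A-B}$ for an arbitrary test set $B$) against lines $v = u/r - b$, and concludes $d_4^+(A) \lesssim |AA|^2/|A|^2$; this is a purely multiplicative-to-energy implication in which $|A\pm A|$ never appears. Step two (Theorem~\ref{main2}) is purely additive: via the operator method (bounding $\operatorname{trace}\bigl((\widetilde{T}^{1_P}_A)^4\bigr)$ from below spectrally and above combinatorially, followed by an iterated appeal to the defining supremum of $d_4^+$ with different choices of $B$) one shows $|A\pm A| \gtrsim |A|^{48/35}/d_4^+(A)^{13/35}$, strictly better than the Cauchy--Schwarz exponent $4/3$. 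Chaining these two gives the stated bound. Without this decoupling and without the flexibility encoded in the supremum over $B$ inside $d_4^+$, a single incidence count of the type you describe cannot get past the $6/5$ exponent that your own configuration already yields.
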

Thus we improve upon Theorem~\ref{RRS} by more than $1/100$. One can replace products with ratios in Theorem~\ref{main}. Also, we can switch the role of plus and times in our arguments, which allows us to interchange sums and products in Theorem~\ref{main}. For $|A| \geq p^{3/5}$, see for instance, \cite{Ga2}. The proof of Theorem~\ref{main} is entirely self--contained modulo the point--line incidence bound of Stevens and de Zeeuw \cite{SZ}. 

We describe our two--step approach to Theorem~\ref{main}. First, we use the point--line incidence bound of Stevens and Zeeuw \cite{SZ} to bound the fourth order additive energy in terms of the product set. An application of Cauchy--Schwarz at this step recovers Theorem~\ref{RRS}. We then bound this fourth order energy via Theorem~\ref{main2} below to conclude the sumset is slightly larger than this application of Cauchy--Schwarz. In the second step we utilize the theory of higher energies developed by the second author \cite{SS, Sh1,Sh2}. We emphasize that this is a particularly simple instance of the operator method, which we describe in detail. 

We actually give two proofs that one can improve Theorem~\ref{RRS}. The first is elementary, though quantitatively worse. We hope the similarities seen in these two proofs will further highlight the flexibility of the operator method. 

In Section~\ref{Notation} we define quantities related to higher energies, provide some basic properties and examples, and introduce Theorem~\ref{main2}. In Section~\ref{incidence} we apply incidence geometry to bound a fourth order energy in terms of the product set. In Section~\ref{elementary}, we provide an elementary improvement to Theorem~\ref{RRS}. In Section~\ref{operator} we introduce the notion of operators and provide some basic properties and examples. In Section~\ref{optimized} we finish the proof of Theorem~\ref{main}. Note that Section~\ref{elementary} and Section~\ref{optimized} do not rely on each other in a fundamental way.

\section{Notation and Set--up}\label{Notation}

 Let $G$ be an abelian group and $A$ and $B$ be two finite subsets. We define a representation function $$r_{A-B}(x) := \#\{(a,b) \in A \times  B : x = a - b\} , \ \ x \in G .$$ Thus $$\sum_{x \in G} r_{A -B}(x) = |A||B|,$$ and so $r_{A-B}(x)$ is $|A||B| |A-B|^{-1}$ on average. The distribution of $r_{A-B}(x)$ depends on the additive structure of $A$ and $B$. To understand this better, it is convenient to introduce the {\em additive energy} of $A$ and $B$ is defined via 

$$E^{+}(A,B) := \sum_x r_{A-B}(x)^2 = \sum_x r_{A+B}(x)^2 ,\ \ \ E^{+}(A) := E^{+}(A,A) .$$ We have the trivial bounds $|A|^2 \leq E^+(A) \leq |A|^3$. Heuristically, $E^+(A)$ is closer to $|A|^3$ when $A$ has additive structure. This is seen more clearly by the relation $$E^+(A) = \#\{(a , b , c, d) \in A^4 : a +b = c+ d\}.$$ Quantitatively, we have the following application of Cauchy--Schwarz \begin{equation}\label{CS} |A|^4 \leq E^+(A) |A+A| . \end{equation} 
We now introduce the $k^{\rm th}$ order energy (we are primarily interested in $k = 2,4$): $$ E_k^{+}(A,B) := \sum_x r_{A-B}(x)^k, \ \ \ E_k^{+}(A) := E_k^{+}(A,A) .$$ 
We take $k=4$, and in this case the trivial bounds are $|A|^4 \leq E_4^+(A) \leq |A|^5$. As with the additive energy, $E_4^+(A)$ being close to $|A|^5$ is an indication of additive structure.
There is the following relation to additive energy via H\"{o}lder \begin{equation}\label{holder} E^+(A,B) = \sum_x r_{A-B}(x)^{4/3 + 2/3} \leq E_4^+(A,B)^{1/3} |A|^{2/3} |B|^{2/3} .\end{equation} 
In particular, 
$$
	E_4^{+} (A,B) \ge \frac{|A|^4 |B|^4}{|A+B|^3} \geq |A||B|,
$$
and so taking $B = \pm A$, we find the analog of  \eqref{CS} for $E^+_4(A)$ is \begin{equation}\label{E4} |A|^8 \leq E_4^+(A) |A\pm A|^3 \,.\end{equation} 
Even with the optimal information $E_4^+(A) \leq |A|^4$, we can only conclude with \eqref{E4} that $|A \pm A| \geq |A|^{4/3}$.
 We have the relation $$E_4^+(A) = \#\{(a , b , c, d,e,f,g,h ) \in A^8 : a -b = c- d = e -f = g-h\}.$$ 
Geometrically, as in \cite[Equation 9]{MRSS}, $E_4^+(A)$ counts the number of collinear quadruples in $A \times A$ that lie on a line of the form $y = x + d$. By double counting the projections onto the coordinate axes, we find that \begin{equation}\label{quad} E_4^+(A) = \sum_{x,y,z} |A \cap (A+x) \cap (A+y) \cap (A+z)|^2.\end{equation} Indeed, $a , a+x , a+y , a+z \in A$ can be extended to a collinear quadruple in $A \times A$ on a line of slope one of the form  $$(a,b) , (a+x , b') , (a+y , b'') , (a+ z , b''') , \ \ \ b , b' , b'' , b''' \in A,$$ if and only if $b = b' + x = b'' + y = b''' + z $. One natural question is to relate $E_4^+(A)$ to lower order energies. We give one possible way to do this in Lemma~\ref{lem1} below.

We will also need the following definition, which is crucial in the proof of Theorem~\ref{main}. 
\begin{defn}\label{dplus}
Let $A \subset G$ be  finite. We define  $$d_4^{+}(A) := \sup_{B \neq \emptyset}\frac{ E_4^+(A,B) }{|A| |B|^3}.$$ \end{defn} 
It follows from Definition~\ref{dplus} that $$1\leq E_4^+(A) |A|^{-4} \leq d_4^{+}(A) \leq |A|.$$ All of the identities we mentioned for $E_4^+(A)$ can be generalized in a natural way for $E_4^+(A,B)$. Intuitively, the closer $d_4^{+}(A)$ is to $|A|$, the more additive structure $A$ has. Indeed $d_4^+(A)$ is a fourth moment analog of the additive energy of a set, where we are allowed flexibility in choosing $B$. The next two key examples highlight the subtle difference in working with $d_4^+(A)$ as opposed to $E_4^+(A)$. We take $x \asymp y$ to mean $x \ll y$ and $x \gg y$.

\begin{exam}[Small progression and many generic elements] Let $n,m \geq 1$ be fixed integers. Let $$A= \{1 , \ldots , n\} \cup \{n , \ldots , n^m\} , \ \ \ n^5 \asymp m^4$$ be
	 the union 
	 of an arithmetic progression with a disassociated set.  Thus $|A| \asymp m$. We have $$E_4^+(A)|A|^{-4} \asymp 1.$$On the other hand, letting $B = \{1 , \ldots , n\}$, we find $$E_4^+(A,B) |A|^{-1} |B|^{-3} \asymp n^2 m^{-1} \asymp m^{3/5} \asymp |A|^{3/5}.$$ Thus $d_4^+(A) \gg |A|^{3/5}$, which is significantly larger than $E_4^+(A) |A|^{-4}$. \end{exam}

The previous example is important in the paper of the second author \cite{Sh2}. In the next example we will see the quantity $d_4^+(A)$ is large because it is contained in a set with additive structure, rather than containing a set with additive structure in the previous example. 

\begin{exam}[Dense subset of interval]\label{example} Consider a random $A \subset \{1 , \ldots , n\}$ where each element is chosen independently and uniformly with probability $p > n^{-1/2}$. We have $|A| \sim pn$ with high probability. Let $B = \{1 , \ldots , n\}$. It follows from Chernoff's inequality (for instance, Chapter 1 of \cite{TV}) and the union bound, that every $x$ with $r_{B-B}(x) \geq  p^{-2} \log n$ satisfies
$$r_{A-B}(x) \sim p \cdot r_{B-B}(x) , \ \ \ \  r_{A-A}(x) \sim p^2 \cdot  r_{B-B}(x) .$$ 
This implies $$\frac{ E_4^+(A,B) }{|A| |B|^3} \sim p^2|A|, \ \  \frac{ E_4^+(A) }{|A|^4} \sim p^3|A|  .$$ In this example $d_4^+(A)$ is larger than what is predicted by the fourth order energy, where we only allow $B=A$ in Definition \ref{dplus}. One can show $d_4^+(A) \sim p^2 |A|$. 
\end{exam}

We note that $B$ cannot be too large or small in Definition~\ref{dplus}.

\begin{remark}\label{size} Observe that the supremum in Definition \ref{dplus} is achieved for some $$ |B| \leq |A|^{3/2}.$$ Indeed for $|B| \geq |A|^{3/2}$, we have $$\frac{E_4^+(A,B) }{|A||B|^3} \leq \frac{|A|^3}{|B|^2} \leq 1 \leq d^+(A).$$ 
\end{remark}
From \eqref{E4} we have the following Cauchy--Schwarz \begin{equation}\label{CS4} |A|^4 \leq |A\pm A|^3 d_4^+(A) . \end{equation} 

So \eqref{CS4} is \eqref{E4} for the quantity $d_4^+(A)$. One of our goals will be to improve upon \eqref{CS4}, utilizing the flexibility in the choice of $B$ in Definition~\ref{dplus}.
\begin{theorem}[Small energy implies large sumset]\label{main2} Let $A$ be a subset of an abelian group. Then $$|A\pm A| \gtrsim |A|^{48/35} /d_4^+(A)^{13/35} $$
\end{theorem}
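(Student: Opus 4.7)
The plan is to improve on the trivial Cauchy--Schwarz bound \eqref{CS4} (which gives only $|A \pm A| \geq |A|^{4/3}/d_4^+(A)^{1/3}$) by extracting additional information from $A - A$ via a dyadic pigeonhole combined with a careful choice of $B$ in Definition \ref{dplus}.

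First I would decompose $A-A$ into dyadic level sets $P_\Delta := \{x : \Delta \leq r_{A-A}(x) < 2\Delta\}$. Since $\sum_\Delta \Delta |P_\Delta| \asymp |A|^2$ and $\sum_\Delta \Delta^2 |P_\Delta| \asymp E^+(A) \geq |A|^4/|A \pm A|$, a standard pigeonhole argument produces a dyadic scale $\Delta$ with $P := P_\Delta$ satisfying $\Delta^2 |P| \gtrsim |A|^4/|A \pm A|$ and $\Delta |P| \lesssim |A|^2$, so that every $b \in P$ has $r_{A-A}(b) \asymp \Delta$. Taking $B = P$ in Definition \ref{dplus} yields $d_4^+(A) \geq E_4^+(A, P)/(|A||P|^3)$, and the task reduces to lower-bounding $E_4^+(A, P)$.

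The main technical step is to beat the naive H\"older estimate $E_4^+(A, P) \geq |A|^4 |P|^4/|A - P|^3$: combined with the Pl\"unnecke--Ruzsa bound $|A - P| \lesssim |A \pm A|^3/|A|^2$ (valid since $P \subset A - A$), that route only recovers \eqref{CS4}. Instead I would chain H\"older inequalities among the energies $E_k^+(A, P)$ for $k = 2, 3, 4$ --- for instance $E_3^+(A, P)^2 \leq E_2^+(A, P) E_4^+(A, P)$ and $E_2^+(A, P)^2 \leq |A||P| E_3^+(A, P)$ --- while inserting a popular-difference boost into the identity $E_2^+(A, P) = \sum_z r_{A-A}(z) r_{P-P}(z)$: the contribution from $z \in P$ is at least $\Delta \sum_{z \in P} r_{P-P}(z)$, improving on the Cauchy--Schwarz lower bound $E_2^+(A, P) \geq |A|^2|P|^2/|A - P|$ in the relevant parameter regime. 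Optimizing the resulting estimate for $d_4^+(A)$ over $\Delta \in [|A|^2/|A \pm A|,\, |A|]$ should produce exactly the exponents $48/35$ and $13/35$ as the solution of a natural linear program.

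The main obstacle I anticipate is bookkeeping in the cascade of H\"older inequalities: they must be balanced so that the popular-difference boost defeats the Cauchy--Schwarz barrier by exactly the margin $4/3 \mapsto 48/35$. This is the point at which the operator-theoretic framework promised in the introduction should organize the calculation cleanly, by packaging the relevant level-set information into the spectrum of a convolution operator.
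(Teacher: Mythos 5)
Your plan is a genuinely different route from the paper's, and it has a gap that I do not think can be closed. Chaining the H\"older inequalities $E_2^+(A,P)^2 \le |A||P|\,E_3^+(A,P)$ and $E_3^+(A,P)^2 \le E_2^+(A,P)\,E_4^+(A,P)$ collapses to $E_4^+(A,P)\ge E_2^+(A,P)^3/(|A||P|)^2$, which is exactly \eqref{holder} rearranged; on its own this gives nothing new. The only genuine leverage in your scheme is the ``popular-difference boost'' $E_2^+(A,P)\ge \Delta \sum_{z\in P}r_{P-P}(z)$, but the quantity $\sum_{z\in P}r_{P-P}(z)$ (the number of Schur-type triples in $P$) has no useful lower bound at this stage; at the popular scale $\Delta\asymp |A|^2/|A\pm A|$, $|P|\asymp|A\pm A|$ the crude estimate $\sum_{z\in P}r_{P-P}(z)\ge|P|$ makes the boost {\em weaker} than the Cauchy--Schwarz bound $E_2^+(A,P)\ge |A|^2|P|^2/|A-P|$. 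In fact the ``naive'' route you propose to beat does not even recover \eqref{CS4}: with $P\subset A-A$ and $|A-P|\lesssim|A\pm A|^3/|A|^2$ it yields only $d_4^+(A)\gtrsim |A|^9/|A\pm A|^8$, i.e.\ $|A\pm A|\gtrsim |A|^{9/8}/d_4^+(A)^{1/8}$, which is far from $4/3$. Any linear program built out of these ingredients alone will therefore fall short of $48/35$.

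What the proposal is missing are the two structural ideas that drive the actual proof. First, one does not try to lower-bound $E_4^+(A,P)$ directly; instead the spectral inequality of Lemma~\ref{energy}, $\bigl(|A|^{-1}\sum_{x\in P}r_{A\pm A}(x)\bigr)^8\le E_4^+(A)\,E^+(P)$, converts smallness of $d_4^+(A)$ into a {\em lower} bound $|A|^4/d_4^+(A)\lesssim E^+(P)$ on the additive energy of the popular set $P\subset A\pm A$. Second, one bounds $E^+(P)$ from {\em above} by a Katz--Koester-style nesting: popularity gives $r_{P+P}(x)\ll \tfrac{|A+A|}{|A|^2}\,r_{A+(A+P)}(x)$, and after dyadically pigeonholing the level sets of $r_{A+P}$ (not of $r_{A-A}$) one re-inserts Definition~\ref{dplus} with $B$ equal to that level set $T$. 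It is precisely this second application of $d_4^+(A)$, together with the constraint $|T|\Delta^4\le d_4^+(A)|A||P|^3$, that produces the exponent $48/35$. Finally, note that your argument runs entirely through $r_{A-A}$ and the identity $E^+(A,P)=\sum_z r_{A-A}(z)r_{P-P}(z)$, which the paper points out has no analogue for sumsets or for higher energies; an elementary argument along those lines can at best touch the minus case (as in Section~\ref{elementary}, which gives only $15/11$), and would not yield the $+$ version of the theorem.
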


To see that this improves \eqref{CS4}, one just needs to check $48/35 > 4/3$. Theorem~\ref{main2} has limitations. In light of Example~\ref{example}, the best one can hope for is $$|A+A| \gtrsim |A|^{3/2} d_4^+(A)^{-1/2}.$$ This unproven bound, combined with Proposition~\ref{energyreduction} below, would give an Elekes \cite{El} type bound for the sum--product problem over $\mathbb{F}_p$. 

\section{Reduction to an energy estimate via incidence geometry}\label{incidence}

The main purpose of this section is to show if the product set of $A \subset \mathbb{F}_p$ is small then $d_4^+(A)$ is small. 

\begin{prop}[Incidence bound]\label{energyreduction} Let $A \subset \mathbb{F}_p$ such that $|A| < p^{3/5}$. Then $$d_4^+(A) \lesssim |AA|^2 |A|^{-2}.$$
\end{prop}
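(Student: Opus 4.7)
The plan is to let $B$ realize the supremum in Definition~\ref{dplus}, so that the claim reduces to proving
$$E_4^+(A, B) \lesssim |AA|^2 |B|^3 / |A|.$$
By Remark~\ref{size} we may assume $|B| \leq |A|^{3/2}$, and (deleting $0$ if present) that $0 \notin A$. After a standard dyadic decomposition on the values of $r_{A-B}(x)$, it suffices to show that for every $\tau \geq 1$, the level set $S_\tau := \{x : r_{A-B}(x) \sim \tau\}$ obeys
$$\tau^4 |S_\tau| \lesssim |AA|^2 |B|^3 / |A|.$$

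The key step is to construct a point--line incidence configuration that naturally injects the multiplicative structure of $A$ into the problem. For each $(c, x) \in A \times S_\tau$, define the line
$$\ell_{c, x} := \{(z, y) \in \mathbb{F}_p^2 : y = c z + c x\}.$$
Slope and intercept recover $c$ and $x$, so these $|A| \cdot |S_\tau|$ lines are pairwise distinct. For each pair $(a, b) \in A \times B$ with $a - b = x$, the identity $ca = cb + cx$ places the point $(b, ca) \in B \times AA$ on $\ell_{c,x}$, producing $r_{A-B}(x) \sim \tau$ distinct points on each line. Setting $P := B \times AA$ and $L := \{\ell_{c,x}\}$, this yields the lower bound
$$I(P, L) \gtrsim \tau \cdot |A| \cdot |S_\tau|.$$

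The hypothesis $|A| < p^{3/5}$ puts the sizes in the range where the Stevens--de Zeeuw Cartesian product incidence bound \cite{SZ} applies, giving (after possibly transposing to put the smaller side first)
$$I(P, L) \lesssim \min(|B|, |AA|)^{3/4} \max(|B|, |AA|)^{1/2} |L|^{3/4} + |L| + |B| \cdot |AA|.$$
Combining with the lower bound and focusing on the dominant first term gives $\tau^4 |A| |S_\tau| \lesssim \min(|B|, |AA|)^3 \max(|B|, |AA|)^2$. In both the case $|B| \leq |AA|$ and the case $|B| > |AA|$ this is $\leq |AA|^2 |B|^3$ (the second case uses the defining inequality $|AA| \leq |B|$ to absorb a factor), producing the desired bound. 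The two remaining terms in the incidence estimate are easily dispensed with using the trivial inequalities $\tau \leq \min(|A|, |B|)$ and $\tau |S_\tau| \leq |A||B|$, whose contributions fall within the target.

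The principal obstacle is conceptual rather than technical: finding the right incidence configuration. The slope parametrization $c \in A$ together with the intercept $cx$ for $x \in S_\tau$ is exactly what constrains the points of interest to the Cartesian product $B \times AA$, so that $|AA|$ enters the bound through the Cartesian product side of the Stevens--de Zeeuw estimate. After that, the remaining work is organizational: verifying the characteristic-dependent hypotheses of \cite{SZ} under $|A| < p^{3/5}$ and tracking the case distinction between $|B| \leq |AA|$ and $|B| > |AA|$.
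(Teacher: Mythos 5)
Your configuration is a minor repackaging of the paper's: the paper applies Theorem~\ref{SZ} to the point set $D_{\tau}\times AA$ and the lines $\{v = u/r - b\}$ indexed by $(r,b)\in A\times B$, whereas you take the point set $B\times AA$ with lines $\ell_{c,x}$ indexed by $(c,x)\in A\times S_\tau$. In both cases the underlying incidences are the quadruples $(x,a,b,c)$ with $a-b=x$, $c\in A$, and the same main term emerges: in your case, either $|B|^{3/4}|AA|^{1/2}(|A||S_\tau|)^{3/4}$ or its transpose, and your case split on $|B|\lessgtr |AA|$ is handled correctly. So up to this point your argument and the paper's are essentially the same proof.

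However, there is a genuine gap. The Murphy--Petridis form of Stevens--de Zeeuw (Theorem~\ref{SZ} in the paper) has \emph{four} terms, not three; you have dropped the characteristic-dependent term $\frac{|B||AA||\mathcal L|}{p}$. Your remark that ``$|A|<p^{3/5}$ puts the sizes in the range where the Stevens--de Zeeuw Cartesian product incidence bound applies'' is not how the theorem works: it is stated unconditionally with the $p$-term present, and one must separately show that this term does not dominate. That is precisely where the hypothesis $|A|<p^{3/5}$ is used, and it is not a formality. In your notation, if the $p$-term dominates one gets $p\tau \lesssim |B||AA|$, and to reach a contradiction one needs first to reduce (WLOG) to $|AA|\le |A|^{3/2}$ (else $d_4^+(A)\le |A|\le |AA|^2|A|^{-2}$ is immediate), then combine $p\tau \lesssim |B||AA|$ with $\tau|S_\tau|\le|A||B|$ and $\tau\le|A|$ (or with the subcase $\tau^3\ge |B|^2|AA|^2|A|^{-2}$ as the paper does) and the bounds $|B|\le|A|^{3/2}$, $|AA|\le|A|^{3/2}$ to deduce $p^3\lesssim |A|^5$, contradicting $|A|<p^{3/5}$. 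Note also that the $p$-term is not absorbed into any other term under $|A|<p^{3/5}$ alone (a quick check shows absorption would require $|A|\lesssim p^{1/2}$ or worse), so this is a genuine case to dispose of, not a side condition. Your ``trivial inequalities'' $\tau\le\min(|A|,|B|)$ and $\tau|S_\tau|\le|A||B|$ do handle the $|\mathcal L|$ and $|B||AA|$ terms, but they are not sufficient by themselves for the $p$-term.
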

The key input in the proof of Proposition~\ref{energyreduction} is the point--line incidence bound of Stevens and de Zeeuw \cite{SZ}. Theorem~\ref{main} then follows immediately upon combining Proposition~\ref{energyreduction} and Theorem~\ref{main2}. On the other hand, combining Proposition~\ref{energyreduction} with \eqref{CS4} recovers Theorem~\ref{RRS}.

\begin{remark} Proposition~\ref{energyreduction} is why we are required to introduce higher order energies of the form in Definition~\ref{dplus}. We provide some context. This idea originated from a paper of Schoen and the second author \cite{SS}. We say a finite $A \subset \mathbb{R}$ is convex if it has increasing consecutive differences. It was known for convex sets that  $$\max_{B \neq \emptyset} \frac{E_3^+(A,B)}{|A|^{} |B|^{2}} \lesssim 1.$$ They were able to use this to show that $$|A+A| \gtrsim |A|^{3/2 + \delta}, \ \ \ \text{for some }   \delta > 0,$$ thus breaking the ``3/2--barrier."
\end{remark}

We will use an incidence theorem specialized for cartesian products due to Stevens and de Zeeuw \cite[Theorem 4]{SZ}, building work in  \cite{BKT, Jo}. The exact statement we use is a slight modification of it appearing in work of Murphy and Petridis (see also \cite[Lemma 12]{Shr5}).


\begin{theorem}\label{SZ} \cite[Theorem 7]{MP}
    Let $A,B \subseteq \F_p$ be sets. Let $\mathcal{P} = A\times B$ and $ \mathcal{L}$ be a collection of lines in $\F^2_p$.
    Then 
    \begin{equation}
    \mathcal{I}(\mathcal{P}, \mathcal{L}) \ll |A|^{3/4} |B|^{1/2} |\mathcal{L}|^{3/4} + |\mathcal{L}| + |A| |B|  + \frac{|A| |B| |\mathcal{L}|}{p} \,.
    \end{equation}  
\end{theorem}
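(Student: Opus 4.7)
The plan is to derive Theorem~\ref{SZ} from Rudnev's point--plane incidence bound in $\mathbb{F}_p^3$ via a three-dimensional lift that exploits the cartesian product structure $\mathcal{P} = A \times B$. Rudnev's theorem gives, for $m$ points and $n$ planes in $\mathbb{F}_p^3$ (with $\min(m,n) \le p^2$ and at most $s$ collinear points on any line in the point set), incidence count $O(m^{1/2} n + s \min(m,n) + mn/p)$, up to the symmetric roles of $m$ and $n$.

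First I would apply a dyadic pigeonhole on line richness. Let $k_\ell = |\ell \cap \mathcal{P}|$ and $L_\Delta = \{\ell : \Delta \le k_\ell < 2\Delta\}$, so that $\mathcal{I}(\mathcal{P},\mathcal{L}) \ll (\log |A|) \max_\Delta \Delta |L_\Delta|$. Lines with $\Delta \le 2$ contribute $O(|\mathcal{L}|)$, giving the second term of the target; any line with $\Delta \ge \min(|A|,|B|)$ must be axis-parallel, contributing $O(|A||B|)$, giving the third term. For intermediate $\Delta$ the task reduces to proving $|L_\Delta| \ll |A|^3 |B|^2 / \Delta^4$ up to a $p$-boundary term.

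Second I would carry out the lift. Identify each non-vertical line $\ell$ with its slope-intercept pair $(m_\ell, c_\ell)$, so the incidence condition reads $b = m_\ell a + c_\ell$. Following the setup of Stevens--de Zeeuw, take the point set $P^* = A \times A \times B \subset \mathbb{F}_p^3$ and define, for each $\ell \in L_\Delta$ and each pivot $b' \in B$ appearing as a $y$-coordinate on $\ell$, the plane $\pi_{\ell, b'} = \{(x,y,z) \in \mathbb{F}_p^3 : z - b' = m_\ell(x - y)\}$. The incidence $(a_1, a_2, b) \in \pi_{\ell, b'}$ records that $(a_1, b)$ and $(a_2, b')$ lie on a common line of slope $m_\ell$, so summing over $\ell \in L_\Delta$ yields at least $\sim \Delta^2 |L_\Delta|$ such incidences, while the number of planes is $\ll \Delta |L_\Delta|$.

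Finally I would apply Rudnev with $m = |A|^2 |B|$ and $n \sim \Delta |L_\Delta|$. The crucial collinearity verification is that any line in $\mathbb{F}_p^3$ contains at most $s \ll \max(|A|,|B|)$ points of $P^*$---a one-parameter family inside $A^2 \times B$ is pinned down by the cartesian product structure. Plugging the sizes into Rudnev's bound and splitting according to which of the three terms dominates produces the desired bound on $|L_\Delta|$; summing over dyadic scales yields the four terms of the theorem, with $|A|^{3/4}|B|^{1/2}|\mathcal{L}|^{3/4}$ coming from the main $m^{1/2} n$ contribution and $|A||B||\mathcal{L}|/p$ coming from the $p$-boundary of Rudnev. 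The main obstacle is the collinearity check---if too many lifted points align on a single line of $\mathbb{F}_p^3$, Rudnev's $s \min(m,n)$ term overwhelms the main term and the bound degrades---together with the bookkeeping needed to reconcile the $m \le n$ versus $m > n$ cases of Rudnev and track the $p$-restriction cleanly through the reduction.
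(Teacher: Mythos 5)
The paper does not offer a proof of Theorem~\ref{SZ}: it is cited verbatim from Murphy and Petridis \cite[Theorem 7]{MP}, which in turn is a modification of Stevens and de Zeeuw \cite{SZ}. So there is no in-paper proof against which to compare; I can only assess your sketch on its own.

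Your route --- dyadic pigeonhole on line richness, then a lift to a point--plane problem in $\mathbb{F}_p^3$ attacked with Rudnev's theorem --- is indeed the general strategy of Stevens and de Zeeuw, so the outline is correct. However, the one step you leave to the reader, ``plugging the sizes into Rudnev's bound produces the desired bound on $|L_\Delta|$,'' is precisely where the argument is delicate, and with the parameters you specify it does not produce the claimed exponents. You take $m = |A|^2|B|$ points of $A \times A \times B$ and $n \lesssim \Delta|L_\Delta|$ planes, with at least $\Delta^2|L_\Delta|$ point--plane incidences. In the regime $n \le m$ Rudnev's leading term gives
\begin{equation*}
\Delta^2|L_\Delta| \ll \bigl(\Delta|L_\Delta|\bigr)^{1/2}\, |A|^2|B|,
\end{equation*}
that is $|L_\Delta| \ll |A|^4|B|^2/\Delta^3$, which is weaker than the target $|A|^3|B|^2/\Delta^4$ by a factor $|A|\Delta$. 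Summing dyadically against $|L_\Delta| \le |\mathcal L|$ then gives $\mathcal{I} \ll |A|^{4/3}|B|^{2/3}|\mathcal L|^{2/3}$, with the Szemer\'edi--Trotter exponent $2/3$ on $|\mathcal L|$ rather than $3/4$. In the opposite regime $m \le n$, Rudnev only returns an upper bound on $\Delta$ and nothing on $|L_\Delta|$. Taking one pivot per line ($n\approx|L_\Delta|$) or all of $B$ as pivots ($n\approx|L_\Delta||B|$) loses in the same way. The collinearity check $s \ll \max(|A|,|B|)$ you flag is correct but it is not the bottleneck: the loss comes from the shape of the lift itself (too many points relative to planes to extract $\Delta^{-4}$ decay), not from a degeneracy in Rudnev's secondary term. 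Also note that your planes depend only on the pair $(m_\ell, b')$ rather than on $(\ell,b')$, so the plane count needs a little more care, though this is not the dominant issue. In short: the approach is the right one, but the specific three-dimensional lift as written is not the one that yields the Stevens--de Zeeuw exponent, and the decisive computation you defer does not close.
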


Theorem~\ref{SZ} is most naturally interpreted as a fourth moment estimate, that is a $d_4^+(A)$ estimate. One can see this by the exponent $3/4$ on $\mathcal{L}$ in the conclusion. This differs from the real case, in which Szemer\'edi--Trotter gives an exponent $2/3$, which is most naturally a third moment estimate. This was taken advantage of in recent work of the first author \cite{Sha} to improve the Balog--Wooley decomposition \cite{BW}.

\begin{proof}[Proof of Proposition~\ref{energyreduction}]
Let $B$ be a set obtaining the maximum in Definition~\ref{dplus}. By a dyadic decomposition, there is a $\tau \geq 1$ such that $$E_4^+(A,B) \lesssim |D_{\tau}| \tau^4 ,  \ \ D_{\tau} = \{x \in A-B : r_{A-B}(x) \geq \tau\}.$$ Thus to prove Proposition~\ref{energyreduction}, it is enough to show \begin{equation}\label{enough} |D_{\tau}| \tau^4 |A|^{-1} |B|^{-3} \ll |AA|^{2} |A|^{-2}. \end{equation} We plan to apply Theorem~\ref{SZ} to the point set $$D_{\tau} \times AA$$ and the $|B||A|$ lines of the form (without loss of the generality one can suppose  that $0\notin A$)
$$\{(u , v) \in \mathbb{F}_p^2 : v = \frac{u}{r} - b \}, \ \ \ r \in A , \ b \in B.$$ 
Observe that for any $a \in A$ there are $|A|$ solutions to the equation $$a = q/r , \ \ q \in AA , \ r \in A.$$ Applying Theorem~\ref{SZ}, we obtain that

\begin{align*}
|A||D_{\tau}| \tau &\leq |A| \#\{(x , a , b) \in D_{\tau} \times A \times B : x = a -b\} \\
& \leq \#\{(x , u , r, b) \in D_{\tau} \times AA \times A  \times B : x = u/r -b\} \\
& \ll |D_{\tau}|^{3/4} |AA|^{1/2} |A|^{3/4} |B|^{3/4} + |A| |B| + |D_{\tau}| |AA| + |A||B||D_{\tau}||AA| p^{-1}.
\end{align*}
Thus $|A||D_{\tau}| \tau$ must be at most a constant times one of the terms on the right hand side. 
In the main case, that is the first term in the bound dominates, we have
$$ |A| \tau |D_{\tau}| \ll |D_{\tau}|^{3/4} |AA|^{1/2} |A|^{3/4} |B|^{3/4},$$ which after simplifying gives \eqref{enough}.

We now handle the error terms. We will use the following two inequalities repeatedly \begin{equation}\label{Dtau} |D_{\tau}| \tau \leq |A||B| ,\end{equation}
\begin{equation}\label{tau} \tau \leq |A| , |B|. \end{equation}
Suppose $$|A||D_{\tau}| \tau \ll |A| |B|.$$ Then we have $$ |D_{\tau}| \tau \ll |B|,$$ which combined with \eqref{tau} gives \eqref{enough} via  $$\frac{|D_{\tau}| \tau^4}{|A| |B|^3} \ll \frac{\tau^3 |B|}{|A| |B|^3} \ll 1 \leq |AA|^2 |A|^{-2}.$$
Next, we suppose $|A||D_{\tau}| \tau  \ll |D_{\tau}| |AA|$ and so $|A|\tau \ll |AA|$. Combining with \eqref{tau} and \eqref{Dtau}, we find
$$ \frac{|D_{\tau}| \tau^4}{|A| |B|^3} \ll |AA|^2 |A|^{-2} \frac{ |D_{\tau}| \tau^2 }{|A| |B|^3} \leq |AA|^2 |A|^{-2} |B|^{-1},$$ which is better than \eqref{enough}.

We may assume \begin{equation}\label{AA} |AA| \leq |A|^{3/2}, \end{equation} for otherwise \eqref{enough} follows immediately from $|D_{\tau}| \tau^4 |A|^{-1} |B|^{-3} \leq |A|$.

Suppose that $|A||D_{\tau}| \tau \ll  |A||B||D_{\tau}||AA| p^{-1}$ and so $p \tau \ll |B| |AA|$. We may suppose $\tau^3 \geq |B|^2 |AA|^2 |A|^{-2}$, since otherwise we may use \eqref{Dtau} to obtain \eqref{enough} via $$|D_{\tau}| \tau^4 |A|^{-1} |B|^{-3} \leq \tau^3 |B|^{-2} \leq |AA|^{2} |A|^{-2}.$$ Then
 $$|AA|^2|A|^{-2} |B|^2p^3 \leq \tau^3 p^3 \ll |B|^3 |AA|^3,$$ 
which simplifies to $$p^3 \ll |AA| |B||A|^2 \leq |A|^5,$$ by \eqref{AA} and Remark~\ref{size}. This case cannot happen by our assumption $|A| < p^{3/5}$ in Proposition~\ref{energyreduction}.

Thus in all possible cases \eqref{enough} holds and so Proposition~\ref{energyreduction} follows.

\end{proof}

We end this section with a summary of the proof of Proposition~\ref{energyreduction}. If $d_4^+(A)$ is large, then the equation $$x = a - b , \ \ x \in D , \  a \in A, \  b\in B,$$ has many solutions for some choices of $D$ and $B$. On the other hand, if $|AA|$ is small, then we may efficiently write $a = q/r$ (an important case to keep in mind is when $A$ is a multiplicative subgroup). Thus there are many solutions to $x = q/r - b$, which is in contrast with the Stevens and de Zeeuw point--line incidence bound.

\section{Elementary energy estimates}\label{elementary}

Our main goal is to use elementary methods, adopted from \cite{SS}, to improve upon \eqref{CS4}, which combined with Proposition~\ref{energyreduction} improves upon the sum--product estimate in Theorem~\ref{RRS}. Wo only handle the ``minus" case, since the ``plus" case is a bit more technically involved (see \cite[Theorem 1]{SS}).
 
We need following notation for $x \in G$, 
\begin{equation}\label{translate} A_x : = A \cap (A+x).\end{equation} Thus $$|A_x| = r_{A - A}(x).$$

The following lemma allows us to pass from fourth order energy to additive energy of subsets of the form \eqref{translate}. This is a special case of \cite[Lemma 2.8]{SV}.
\begin{lemma}\label{lem1} Let $A$ be a subset of an abelian group $G$. Then $$E_4^+(A) = \sum_{x , w \in G} E^+(A_x , A_w).$$ 
\end{lemma}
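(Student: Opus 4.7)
The plan is to prove the identity by double counting: expand both sides as sums of indicator functions over tuples in $A$ and match them up via a bijection of parameters.

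First, I would record the identity
$$E_4^+(A) = \sum_{t \in G} r_{A-A}(t)^4 = \sum_{t \in G} |A_t|^4,$$
using that $r_{A-A}(t) = |A \cap (A+t)| = |A_t|$. This will be the target expression.

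Next, I would expand the right-hand side. By definition,
$$E^+(A_x, A_w) = \#\{(a,c,b,d) \in A_x \times A_x \times A_w \times A_w : a - b = c - d\},$$
so writing the membership conditions out (namely $a,c \in A$ with $a-x, c-x \in A$, and $b,d \in A$ with $b-w, d-w \in A$), summing over $x,w$, and swapping the order of summation gives
$$\sum_{x,w} E^+(A_x, A_w) = \sum_{\substack{(a,b,c,d) \in A^4 \\ a-b = c-d}} N_1(a,c) \cdot N_2(b,d),$$
where $N_1(a,c) = \#\{x : a-x \in A, \ c-x \in A\}$ and $N_2(b,d) = \#\{w : b-w \in A, \ d-w \in A\}$.

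The key computation is that $N_1(a,c) = |A_{a-c}|$: the map $x \mapsto a-x$ identifies the set of valid $x$ with $\{a' \in A : a' + (c-a) \in A\} = A \cap (A + (a-c))$, which has size $|A_{a-c}|$. Similarly $N_2(b,d) = |A_{b-d}|$. Since the constraint $a-b = c-d$ is equivalent to $a - c = b - d$, I can reindex by $u := a-c = b-d$ and get
$$\sum_{x,w} E^+(A_x, A_w) = \sum_u \#\{(a,c) \in A^2 : a-c=u\} \cdot \#\{(b,d) \in A^2 : b-d=u\} \cdot |A_u|^2 = \sum_u r_{A-A}(u)^2 |A_u|^2 = \sum_u |A_u|^4,$$
which matches $E_4^+(A)$.

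This is a bookkeeping identity rather than a result with a conceptual obstacle; the only care needed is to correctly convert each $A_x$-membership condition into a constraint on the translation parameter and then recognize that the two constraints $a-b=c-d$ (the energy relation) and $a-c=b-d$ (the reparametrization in terms of $u$) are the same linear equation.
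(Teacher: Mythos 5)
Your proof is correct, and it is organized differently from the paper's. You start from the definitional expansion $E_4^+(A)=\sum_t |A_t|^4$, expand $\sum_{x,w}E^+(A_x,A_w)$ as a count of quadruples $(a,c,b,d)$ with $a-b=c-d$, swap the order of summation, compute the inner counts $N_1(a,c)=|A_{a-c}|$ and $N_2(b,d)=|A_{b-d}|$, and then reindex using the equivalence $a-b=c-d \iff a-c=b-d$. The paper instead proceeds through the geometric identity $E_4^+(A)=\sum_{x,y,z}|A\cap(A+x)\cap(A+y)\cap(A+z)|^2$ (its equation \eqref{quad}): it rewrites the triple intersection as $|A_x\cap(A_w+y)|^2$ via the substitution $w=z-y$, recognizes $|A_x\cap(A_w+y)|=r_{A_x-A_w}(y)$, and sums over $y$ to reassemble $E^+(A_x,A_w)$. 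Both are double-counting arguments, but your route is more elementary and self-contained, while the paper's route passes through \eqref{quad}, which is an independently useful waypoint reused later (e.g., in the proof of Lemma~\ref{energy} in Section~\ref{optimized}, where a variant of exactly this reindexing is needed with mixed signs). If you intended your proof as a drop-in replacement, it would suffice here, but you would then need a separate justification when the $\pm$-sign issue of Section~\ref{optimized} arises, which the paper handles by reducing back to the same triple-intersection form.
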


\begin{proof}
By \eqref{quad}, we have \begin{align*}E_4^+(A) &= \sum_{x,y,z} |A \cap (A+x) \cap (A+y) \cap (A+z)|^2 \\
& = \sum_{x ,y,z} |A_x \cap (A_{z -y} + y)|^2 
 = \sum_{x, w,y} |A_x \cap (A_w + y)|^2 \\
& = \sum_{x,w,y}  r_{A_x - A_w}(y)^2 = \sum_{x , w} E^+(A_w , A_x)\end{align*}
\end{proof}

The next lemma asserts if the fourth energy of $A$ is small, then many of the sumsets from \eqref{translate} are large. This is a special case of \cite[Lemma 2.6]{SV}.

\begin{lemma}\label{energy}Let $P$ be any subset of $A-A$. Then $$\left(\sum_{x \in P} |A_x| \right)^4 \leq E_4^+(A)\sum_{x,w \in P} |A_x \pm A_w|  .$$
\end{lemma}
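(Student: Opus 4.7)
\medskip

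The plan is to apply the Cauchy--Schwarz inequality twice and then invoke Lemma~\ref{lem1} to identify the double sum of additive energies with $E_4^+(A)$. The starting point is the standard Cauchy--Schwarz inequality $|X|^2|Y|^2 \le E^+(X,Y)\,|X \pm Y|$, which follows from expanding $|X||Y| = \sum_s r_{X \pm Y}(s)$ and applying Cauchy--Schwarz to the sum. Taking $X = A_x$ and $Y = A_w$, this gives $|A_x||A_w| \le E^+(A_x,A_w)^{1/2} |A_x \pm A_w|^{1/2}$ for every $x,w \in G$.

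First I would sum this pointwise estimate over pairs $(x,w) \in P \times P$ to obtain
$$\left(\sum_{x \in P} |A_x|\right)^2 = \sum_{x,w \in P} |A_x||A_w| \le \sum_{x,w \in P} E^+(A_x,A_w)^{1/2} |A_x \pm A_w|^{1/2}.$$
Then I would apply the Cauchy--Schwarz inequality to this last sum (splitting the summand into the two factors) to get
$$\left(\sum_{x \in P} |A_x|\right)^2 \le \left(\sum_{x,w \in P} E^+(A_x,A_w)\right)^{1/2} \left(\sum_{x,w \in P} |A_x \pm A_w|\right)^{1/2}.$$

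Finally, since $P \subseteq A-A \subseteq G$ and each term $E^+(A_x,A_w)$ is nonnegative, extending the first sum to all of $G \times G$ and invoking Lemma~\ref{lem1} yields
$$\sum_{x,w \in P} E^+(A_x,A_w) \le \sum_{x,w \in G} E^+(A_x,A_w) = E_4^+(A).$$
Squaring the resulting inequality gives the claim. I do not expect any real obstacle here: the argument is a clean two-step Cauchy--Schwarz combined with the bookkeeping identity of Lemma~\ref{lem1}. The only point requiring a moment of care is choosing the right splitting in the second Cauchy--Schwarz so that the sumset factor $|A_x \pm A_w|$ appears to the first power (matching the conclusion) and the energy factor pairs correctly with $E_4^+(A)$.
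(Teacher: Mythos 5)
Your argument is correct and matches the paper's proof essentially line for line: both apply the Cauchy--Schwarz inequality $|A_x||A_w| \le E^+(A_x,A_w)^{1/2}|A_x \pm A_w|^{1/2}$ pointwise, sum over $P \times P$, apply Cauchy--Schwarz again to split the double sum, and then extend the energy sum to all of $G$ to invoke Lemma~\ref{lem1}. No differences worth noting.
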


\begin{proof} The proof is Lemma~\ref{lem1} and two applications of  Cauchy--Schwarz, the first of which being \eqref{CS}. We have \begin{align*} \left(\sum_{x \in P} |A_x| \right)^4 & = \left(\sum_{x ,w \in P} |A_x||A_w| \right)^2 \leq  \left(\sum_{x ,w \in P} |A_x \pm A_w|^{1/2} E^+(A_x , A_w)^{1/2} \right)^2 \\
& \leq \sum_{x ,w \in P} |A_x \pm A_w| \sum_{x,w}  E^+(A_x , A_w) = E_4^+(A)  \sum_{x ,w \in P} |A_x \pm A_w|
\end{align*}
\end{proof}

We remark that we could apply Lemma~\ref{energy} with $P = A-A$ along with the trivial bound $$|A_x - A_w| \leq |A - A|,$$ to obtain $$|A|^8 \leq E_4^+(A) |A-A|^3.$$ Combining this with the following implication of Definition~\ref{dplus} \begin{equation}\label{triv} E_4^+(A) \leq d_4^+(A) |A|^4, \end{equation} we are lead back to the bound \eqref{CS4}. We improve upon this argument with Katz--Koester inclusion \cite{KK}. 

\begin{prop}\label{diff} Let $A$ be a finite subset of an abelian group $G$. Then $$|A-A| \gg \frac{|A|^{15/11}}{d_4^+(A)^{4/11}}.$$
\end{prop}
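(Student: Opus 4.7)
The plan is to follow the Schoen--Shkredov template: apply Lemma~\ref{energy} to a popular subset of $A-A$, use the Katz--Koester inclusion to extract structural information about $|A_x - A_w|$, and invoke the definition of $d_4^+(A)$ at the critical Cauchy--Schwarz step.

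Concretely, I would set $\Delta := |A|^2/(2|A-A|)$ and $P := \{x \in A-A : |A_x| \geq \Delta\}$. The unpopular contribution is $\sum_{x \notin P}|A_x| \leq \Delta|A-A| = |A|^2/2$, so $\sum_{x \in P}|A_x| \geq |A|^2/2$ and $|P| \leq 2|A-A|$. Lemma~\ref{energy} then yields
\[
|A|^8 \lesssim E_4^+(A)\sum_{x,w \in P}|A_x - A_w| \leq d_4^+(A)\, |A|^4 \sum_{x,w \in P}|A_x - A_w|,
\]
reducing the proof to an upper bound on $\Sigma := \sum_{x,w \in P}|A_x - A_w|$. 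The trivial bound $\Sigma \leq |P|^2 |A-A| \lesssim |A-A|^3$ only recovers \eqref{CS4}, so more is needed.

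To break this barrier I invoke the Katz--Koester inclusion $A_x - A_w \subseteq (A-A) \cap ((A-A) + x - w)$, which gives $|A_x - A_w| \leq r_{(A-A)-(A-A)}(x - w)$, and similarly $A_x - A_w \subseteq A - A_w$ gives $|A_x - A_w| \leq r_{(A-A)-(A-A)}(w)$. Rewriting $\Sigma$ as a convolution $\sum_z r_{P-P}(z) \cdot r_{(A-A)-(A-A)}(z)$ and applying Cauchy--Schwarz reduces the question to bounding $E^+(A-A)$ together with a second moment on $P$. The definition of $d_4^+(A)$ enters through a lower bound of the form $|A - A_w| \geq |A|\cdot|A_w|^{1/3}/d_4^+(A)^{1/3}$, derived by applying H\"older to $E_4^+(A, A_w) \leq d_4^+(A)|A||A_w|^3$; this forces popular $A_w$'s to have structurally large shifted difference sets, preventing trivial saturation of Katz--Koester and injecting the needed power of $d_4^+(A)$ into the final inequality.

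The main obstacle is the precise balancing: the gap between the target exponent $15/11$ and the trivial $4/3$ is only $1/33$, so the Cauchy--Schwarz and dyadic-pigeonhole optimization must be tight. After carrying out this balancing---essentially decomposing $P$ into dyadic level sets $P_\tau = \{x : |A_x| \asymp \tau\}$, running the chain of estimates on the richest class (where $\tau|P_\tau| \gtrsim |A|^2/\log|A|$), and optimizing over $\tau$---the resulting inequality rearranges to $|A-A|^{11} d_4^+(A)^4 \gtrsim |A|^{15}$, which is the claim of Proposition~\ref{diff}.
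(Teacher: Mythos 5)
Your scaffolding is correct and matches the paper: the same popular set $P=\{x\in A-A: r_{A-A}(x)\ge |A|^2/(2|A-A|)\}$, the same first inequality obtained from Lemma~\ref{energy} together with $E_4^+(A)\le d_4^+(A)|A|^4$, and Katz--Koester as the structural input. The gap is in the mechanism you propose for beating the trivial bound $\sum_{x,w\in P}|A_x-A_w|\ll|A-A|^3$. The two-fold Katz--Koester you write down gives $\Sigma\le\sum_{x,w\in P}r_{D-D}(x-w)=\sum_z r_{P-P}(z)r_{D-D}(z)$, and Cauchy--Schwarz then yields $E^+(P)^{1/2}E^+(D)^{1/2}\le E^+(D)$ (since $P\subset D$), which is $\le|D|^3$ --- still the trivial bound, so nothing has been gained. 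The other ingredient you invoke, the lower bound $|A-A_w|\ge |A|\,|A_w|^{1/3}/d_4^+(A)^{1/3}$ obtained by applying H\"{o}lder to $E_4^+(A,A_w)\le d_4^+(A)|A||A_w|^3$, goes in the wrong direction: when you need to bound $\Sigma$ from above, a lower bound on the bigger set $A-A_w\supset A_x-A_w$ gives you nothing. So as described, the critical step is missing.

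What the paper actually does is use the full four-fold Katz--Koester to reach $\Sigma\le\sum_{x\in P}|D_x|^2$ with $D=A-A$, and then, crucially, exploit the popularity of $P$ \emph{inside} that sum rather than after a Cauchy--Schwarz: since $x\in P$ forces $1\ll |D|\,|A|^{-2}r_{A-A}(x)$,
\[
\sum_{x\in P}|D_x|^2\le |D|\sum_{x\in P}r_{D-D}(x)\ll \frac{|D|^2}{|A|^2}\sum_x r_{D-D}(x)\,r_{A-A}(x)=\frac{|D|^2}{|A|^2}\,E^+(A,D),
\]
using the identity $E^+(A,D)=\sum_x r_{A-A}(x)r_{D-D}(x)$ (which holds for second-order energy). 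Now H\"{o}lder on $E^+(A,D)\le E_4^+(A,D)^{1/3}(|A||D|)^{2/3}$ and the definition of $d_4^+(A)$ applied with $B=D$ --- not with $B=A_w$ as you suggest --- gives $E^+(A,D)\le d_4^+(A)^{1/3}|A|\,|D|^{5/3}$. Combining with $|A|^4/d_4^+(A)\le\sum_{x\in P}|D_x|^2$ produces $|A|^5\le d_4^+(A)^{4/3}|D|^{11/3}$, i.e.\ the claimed $|A-A|\gg|A|^{15/11}/d_4^+(A)^{4/11}$. No further dyadic decomposition of $P$ is needed; the single popularity threshold already supplies the factor that breaks the $4/3$ barrier.
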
 

This is an improvement on \eqref{CS4}. Combining this with Proposition~\ref{energyreduction} gives the difference--product estimate $$|A-A| + |AA| \gg |A|^{6/5 + 1/95}.$$ 

\begin{proof} Let $$P = \{x \in A -A : r_{A-A}(x) \geq |A|^2 / (2 |A-A|) \},$$ be a set of popular differences. Then $$\sum_{x \in P} r_{A-A}(x) \geq |A|^2 / 2.$$ By Lemma~\ref{energy} and \eqref{triv} we find \begin{equation}\label{inter}|A|^8 \ll E_4^+(A) \sum_{x ,w \in P} |A_x - A_w| \leq d_4^+(A) |A|^4 \sum_{x ,w \in P} |A_x - A_w|  .\end{equation} 
Let $D= A-A$. Then Katz--Koester inclusion yields $$A_x - A_w \subset D \cap (D+x) \cap (D - w) \cap (D+ x - w).$$ Thus we have \begin{align*}\sum_{x ,w \in P} |A_x - A_w| & \leq \sum_{x ,w \in P}  |D \cap (D+x) \cap (D -w) \cap (D+ x -w)| \\ & = \sum_{x , w \in P} |D_x \cap (D_x  -w)| \leq \sum_{x \in P} |D_x|^2.\end{align*}
Combining this with \eqref{inter}, we have  \begin{equation}\label{setversion} |A|^4 d_4^+(A)^{-1} \leq E^+(D) \end{equation}
Now the trivial bound is $$\sum_{x \in P} |D_x|^2 \leq |D|^3,$$ which upon combing with \eqref{setversion} would yield \eqref{CS4}. We can improve upon this when $d_4^+(A)$ is small. Using $x \in P$ and H\"{o}lder via \eqref{holder}, we find \begin{align*} \sum_{x \in P} |D_x|^2 & \leq |D| \sum_{x \in P} r_{D-D}(x) \ll \frac{|D|^2}{|A|^2} \sum_{x \in P} r_{D-D}(x) r_{A-A}(x) \leq\frac{|D|^2}{|A|^2}  E^+(A,D) \\ & \leq \frac{|D|^2}{|A|^2} E_4^+(A, D)^{1/3} (|A| |D|)^{2/3} \leq \frac{|D|^2}{|A|^2} (d_4^+(A) |A| |D|^3)^{1/3} (|A| |D|)^{2/3} \end{align*}
We combine this with \eqref{setversion} to obtain $$ \frac{|A|^{4}}{d_4^+(A)} \leq d_4^+(A)^{1/3} |A|^{-1} |D|^{11/3}.$$ Simplifying gives Proposition~\ref{diff}.

\end{proof}

\begin{remark} In the proof of Proposition~\ref{diff}, we used the identity $$E^+(A,D) = \sum_{x} r_{A-A}(x) r_{D-D}(x),$$ which is an consequence of $$a - a' = d  - d' \ \  \text{if and only if}  \ \ a - d = a' - d'.$$ There is no such identity for higher energies.
\end{remark}

\section{Operator method basics}\label{operator}

We now introduce the notions from the theory of operators and higher order  energies, developed by the second author \cite{Sh1, Sh2}. We do not require much, but we provide some context. This theory was already used to improve sum--product estimates over the real numbers \cite{KS, KS2, Sha}. The results we need are much simpler than the real case, for a reason we will explain below. We will use some basic facts from linear algebra, which can be found in \cite{HJ}.

 For two functions $f,g :G \to \mathbb{C}$ with finite support, we define the convolution of $f$ and $g$ via \begin{equation*}\label{f:convolutions}
(f*g) (x) := \sum_{y\in G} f(y) g(x-y)\,, \ \ \ (f \circ g) (x) := \sum_{y \in G} f(y) g(x+y).
\end{equation*}
Thus $$r_{A+B}(x) = 1_A * 1_B(x) , \ \ \ r_{A-B}(x) = 1_A \circ 1_B(x).$$ 
We first want to illustrate why the study of operators is a natural one in additive combinatorics. To this end, we interpret the set $A$ as a {\em convolution operator}, $K_A$ or $\widetilde{K}_A$, via $$K_A(f)(x) := 1_A * f (x) , \ \ \ \widetilde{K}_A(f)(x) := 1_A \circ f (x).$$ All of the quantities we have previously defined can be given in terms of $\widetilde{K}_A$: \begin{align*}E^+(A) &= ||\widetilde{K}_A( 1_A)||_2^2 , \ E_4^+(A) = ||\widetilde{K}_A (1_A)||_4^4, \\ ||\widetilde{K}_A(1_B)||_1 &= |A||B|, \ |A-B| =| {\rm supp}( \widetilde{K}_A (1_B)| \\ d_4^+(A) &= |A|^{-1} ||\widetilde{K}_A||_{\ell^4 \to \ell^{4/3}}^3 , \  r_{A-B}(x) = \widetilde{K}_A (1_B)(x) .\end{align*} Indeed $\widetilde{K}_A(1_0) = 1_A$, and so we can recover $A$ from $\widetilde{K}_A$. Thus there is no loss in generality in considering $\widetilde{K}_A$ in place of $A$. We also need a {\em local} version of $\widetilde{K}_A$, given by $$\widetilde{K}_A^A (f)(x) : = 1_A(x) \widetilde{K}_A(f)(x).$$ Again $\widetilde{K}_A^A(1_0)(x) = 1_A(x)$, so we may recover $A$ from $\widetilde{K}_A^A$. A priori, $\widetilde{K}_A^A$ is not a finite dimensional operator, but from the definition we see \begin{equation}\label{KA} \widetilde{K}_A^A(1_z) (x)= 1_A(x)1_{A-z}(x).\end{equation} This can be nonzero only if $z \in A-A$ and so we may interpret $\widetilde{K}_A^A$ as a $A \times (A-A)$ matrix with the $(x,z)$ entry given in \eqref{KA}.

Given a function $g : G \to \mathbb{C}$, we let $T_A^g$ and $\widetilde{T}_A^g$ be the $|A| \times |A|$ square matrices defined by $$T_A^g(x,y) := g(x-y) 1_A(x) 1_A(y), \ \ \ \widetilde{T}_A^g(x,y) := g(x+y) 1_A(x) 1_A(y).$$ One important example is $g(z) = r_{A-A}(z)$. From \eqref{KA}, we see that $$T_A^{r_{A-A}} = \widetilde{K}_A^A (\widetilde{K}_A^A)^*,$$ where $(\widetilde{K}_A^A)^*$ is the adjoint of $\widetilde{K}_A^A$. Thus $T_A^{r_{A-A}}$ is a non-negative, self adjoint operator. 

\begin{exam}Let us compute $T_A^{r_{A-A}}$ two examples. First take $S$ to be a Sidon set. Then $$T_S^{r_{S-S}} = (|S| - 1) I + J,$$ where $J$ is the all ones matrix. Now, suppose that $H$ is a subgroup of $G$. In this case $$T_H^{r_{H-H}} = |H| J.$$ The eigenvalues for $T_S^{r_{S-S}}$ are $$2|S| -1 , |S| -1 , \ldots , |S|-1,$$ while the eigenvalues for $T_H^{r_{H-H}}$ are $$|H|^2, 0 , \ldots , 0.$$ Thus the eigenvalues of $T_A^{r_{A-A}}$ for sets with additive structure are localized. 
\end{exam}

One can check that adding a few disassociated elements to $H$ in the previous example does not greatly alter the eigenvalues. Before the next example, we recall some facts from linear algebra. Let $M$ be an $n \times n$ Hermitian matrix with eigenvalues $|\mu_1| \geq \ldots \geq |\mu_n|$. Then $$|\mu_1| = \sup_{||v||_2 = 1} |\langle Mv , v \rangle|,  \ \ \ {\rm trace}(M^k) = \sum_{j=1}^n \mu_j^k.$$

\begin{exam} Let us give a basic demonstration on how to use the operator $T_A^{r_{A-A}} $ to obtain higher energy bounds. Note that $T_A^{r_{A-A}}$ is self adjoint and non--negative and so it has $|A|$ real eigenvalues $\mu_1 \geq \cdots \geq \mu_{|A|} \geq 0$. Thus $$\mu_1^2 \leq \sum_{j=1}^{|A|} \mu_j^2 \leq {\rm trace}({(T_A^{r_{A-A}}})^2) = \sum_{x,y \in A} r_{A-A}(x-y) r_{A-A}(y-x) = E_3^+(A).$$ On the other hand, we may take the unit vector $v : = |A|^{-1/2} (1 , \ldots , 1)^T$ and see that $$\mu_1 \geq \langle T_A^{r_{A-A}} v , v \rangle = E^+(A) |A|^{-1}.$$ Putting this together, we obtain $$E^+(A)^2 \leq E_3^+(A) |A|^2,$$ which recovers an application of Cauchy--Schwarz. The situation that arises after will be more complicated combinatorially (we take a fourth power of a matrix), but is in a similar spirit to this example. 
\end{exam}

We now introduce a different choice of $g$. Computing higher powers of $T_A^g$ and $\widetilde{T}_A^g$ for this choice of $g$ gives rise to other combinatorial quantities. \begin{remark}\label{Pidentity} Let $P \subset A+A$ and consider $\widetilde{T}_A^{1_P}$.  Note that $\widetilde{T}_A^{1_P}$ is self--adjoint, but not non--negative in general. Let $|\mu_1| \geq \dots \geq |\mu_{|A|}|$ be the eigenvalues. For the unit vector $v = |A|^{-1/2}(1 , \ldots , 1)$, we have $$|\mu_1| \geq \langle \widetilde{T}_A^{1_P} v , v \rangle = |A|^{-1} \sum_{x , y  \in A} 1_P(x+y) =  |A|^{-1} \sum_{z \in P} r_{A+A}(z).$$  Thus for any integer $k \geq 1$, we have 
\begin{equation}\label{trace} \left( |A|^{-1} \sum_{z \in P} r_{A+A}(z) \right)^{2k} \leq  \mu_1^{2k} \leq \sum_{j=1}^{|A|} \mu_j^{2k} \leq  {\rm trace}((\widetilde{T}_A^{1_P})^{2k}). \end{equation} 
The right hand side of \eqref{trace} can be written as $$\sum_{x_1 , \ldots , x_{2k} \in A} 1_P(x_1+x_2) \cdots 1_P(x_{2k-1} + x_{2k}) 1_P(x_{2k} + x_1).$$ See Lemma~\ref{eigen} below for one possible estimate of this quantity.
\end{remark}

For a finite set $A \subset G$ we define another natural notion of higher order energy via \begin{align*} T_k(A) & = \#\{(a_1 , \ldots , a_{2k} )\in A^{2k} : a_1 + \ldots + a_k = a_{k+1} + \ldots + a_{2k}\} \\ &= \int_{\mathbb{R}/ \mathbb{Z}} |\sum_{a \in A} e^{2 \pi i a \theta} |^{2k} d \theta.\end{align*} 
Higher powers of $\widetilde{T}_A^{1_P}$ in \eqref{trace} give rise combinatorial quantities involving higher order energies of $A$.

\begin{lemma}\cite[Proposition 31]{Sh2}\label{eigen} Let $k\geq 2$ be a power of two and $A, B \subset G$ be two finite sets. Then for any $P \subset A - B$, one has 

$$\left(\sum_{x\in P} r_{A-B}(x)\right)^{4k} \leq |A|^{2k} |B|^{2k} T_k(P) \sum_x r_{A-A}(x)^k r_{B-B}(x)^k.$$
\end{lemma}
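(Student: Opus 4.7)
The plan is to realize $I := \sum_{x \in P} r_{A-B}(x)$ as a matrix coefficient of a natural operator and then control a power of its top singular value via a trace, in the spirit of Remark~\ref{Pidentity}. Let $L$ be the matrix indexed by $A \times B$ with entries $L(a,b) := 1_P(a - b)$, so $I = \langle L\, 1_B, 1_A \rangle$. If $\sigma_1(L)$ denotes the largest singular value, then $I \leq \sigma_1(L)\,|A|^{1/2}|B|^{1/2}$, and the crude estimate $\sigma_1(L)^{2k} \leq \sum_j \sigma_j(L)^{2k} = \operatorname{tr}((L^*L)^k)$ yields
\[
I^{2k} \leq |A|^k |B|^k \operatorname{tr}((L^*L)^k).
\]
Squaring this inequality, it therefore suffices to show $\operatorname{tr}((L^*L)^k)^2 \leq T_k(P) \sum_x r_{A-A}(x)^k r_{B-B}(x)^k$.

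Expanding the trace directly gives
\[
\operatorname{tr}((L^*L)^k) = \sum_{a \in A^k,\, b \in B^k} \prod_{i=1}^k 1_P(a_i - b_i)\, 1_P(a_i - b_{i+1}),
\]
with indices taken modulo $k$. I would reparametrize by $p_i := a_i - b_i$ and $q_i := a_i - b_{i+1}$, so that $(p,q) \in P^{2k}$ and $b_{i+1} - b_i = p_i - q_i$. Cyclic closure forces $\sum_i p_i = \sum_i q_i$, which is precisely the constraint defining $T_k(P)$. For each admissible $(p,q)$ the number of contributing $(a,b)$ tuples equals
\[
f(p,q) := \#\{b_1 \in G : b_1 + s_i \in B,\ b_1 + s_i + p_i \in A \text{ for all } i\},
\]
where $s_i := \sum_{j<i}(p_j - q_j)$. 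Cauchy--Schwarz on $\operatorname{tr}((L^*L)^k) = \sum_{(p,q)} f(p,q)$ then produces
\[
\operatorname{tr}((L^*L)^k)^2 \leq T_k(P) \sum_{(p,q)} f(p,q)^2.
\]

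To finish I would interpret $f(p,q)^2$ as counting pairs of base points $(b_1, b_1')$ and introduce $\delta := b_1' - b_1$. The primed configuration is then the unprimed one translated by $\delta$, so the extra constraints reduce to $a_i \in A \cap (A - \delta)$ and $b_i \in B \cap (B - \delta)$ for every $i$. Dropping the $P$-conditions (a free upper bound) and summing in $\delta$ gives
\[
\sum_{(p,q)} f(p,q)^2 \leq \sum_\delta |A \cap (A - \delta)|^k\,|B \cap (B - \delta)|^k = \sum_x r_{A-A}(x)^k\, r_{B-B}(x)^k,
\]
using $|A \cap (A - \delta)| = r_{A-A}(\delta)$. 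Chaining the inequalities produces the claim. The main potentially delicate point is organizing the reparametrization so that the cyclic trace condition $\sum_i(p_i - q_i) = 0$ is recognized as precisely the defining constraint of $T_k(P)$; once this bookkeeping is in place, the estimate assembles mechanically, and the hypothesis that $k$ is a power of two does not appear to play a role in this particular route.
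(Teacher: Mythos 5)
Your proof is correct and follows exactly the route the paper indicates for this lemma: the paper proves only the special case $B=A$, $k=2$ via a symmetric operator and eigenvalues (Lemma~\ref{energy} in Section~\ref{optimized}), and explicitly remarks that the general case ``requires singular value decomposition,'' which is precisely what you carry out with the $A\times B$ matrix $L$, the bound $\sigma_1^{2k}\le\operatorname{tr}((L^*L)^k)$, the reparametrization by $p_i=a_i-b_i$, $q_i=a_i-b_{i+1}$, and a final Cauchy--Schwarz separating $T_k(P)$ from the multiplicity count. Your observation that the power-of-two hypothesis on $k$ plays no role in this argument is also correct; it appears to be an artifact of the phrasing in the cited reference.
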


We only need a special case of Lemma~\ref{eigen}, and we will provide a proof in this special case in Lemma~\ref{energy} below. We state Lemma~\ref{eigen} it here to illustrate the generality of the operator method. The case $B = A$ is easier as one can prove Lemma~\ref{eigen} with the spectral theory of self--adjoint operators rather than singular value decomposition. In the case $B= A$, $P=A-A$, and $k=2$, Lemma~\ref{eigen} reduces to \begin{equation}\label{first} |A|^8 \leq E_4^+(A) E^+(P).\end{equation} We show that if $A$ has a small product set than we have non--trivial bounds for both quantities on the right hand side of \eqref{first}. Actually we already saw this inequality in \eqref{setversion}, but we need Lemma~\ref{eigen} to obtain a {\em statistical version}, where $P$ is allowed to be a set of popular differences.

\section{Optimized energy estimates}\label{optimized}

Our main goal is to prove Theorem~\ref{main2} and improve upon \eqref{CS4}. The proof requires two separate parts. The first is a simple application of the operator method developed by the second author. This is a special case of Lemma~\ref{eigen}, but we provide a proof here.

\begin{lemma}\label{energy}\cite[Proposition 31]{Sh2}
	Let $A$ be a subset of an abelian group and $P\subseteq A-A$. 
	Then
$$
	\left( \frac{\sum_{x\in P} r_{A-A} (x)}{|A|}\right)^{8} \leq E^{+}_{4} (A) E^+ (P) \,.
$$ 
	Similarly, for any $P\subseteq A+A$ the following holds 
$$
\left( \frac{\sum_{x\in P} r_{A+A} (x)}{|A|}\right)^{8} \leq E^{+}_{4} (A) E^{+} (P) \,.
$$
\end{lemma}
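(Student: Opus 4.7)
The plan is to realize $P$ as the operator $M := T_A^{1_P}$ from Section~\ref{operator}, whose $(x, y)$ entry equals $1_P(x - y) 1_A(x) 1_A(y)$, and to sandwich $|A|^{-1}\sum_{x \in P} r_{A - A}(x)$ between a lower and upper bound on $\|M\|_{\mathrm{op}}$. For the lower bound, I test $M$ against the unit vector $v := |A|^{-1/2} 1_A$: then
$$\langle Mv, v \rangle \;=\; |A|^{-1} \sum_{x, y \in A} 1_P(x - y) \;=\; |A|^{-1} \sum_{x \in P} r_{A-A}(x),$$
and Cauchy--Schwarz on the inner product gives $\|M\|_{\mathrm{op}} \geq \langle Mv, v\rangle$ since $\|v\|_2 = 1$.

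For the upper bound I would use the standard spectral estimate $\|M\|_{\mathrm{op}}^4 \leq \sum_j \sigma_j^4 = \mathrm{trace}((MM^*)^2)$, since the eigenvalues of $MM^*$ are the squares of the singular values of $M$. Expanding,
$$\mathrm{trace}((MM^*)^2) = \sum_{x, x', y, y' \in A} 1_P(x - y) 1_P(x' - y) 1_P(x - y') 1_P(x' - y'),$$
and the substitution $a = x - y$, $b = x - y'$, $c = x - x'$ turns this into
$$\sum_{a, b, c} 1_P(a) 1_P(a - c) 1_P(b) 1_P(b - c) \; \bigl|A \cap (A + a) \cap (A + b) \cap (A + c)\bigr|,$$
because once $(a, b, c)$ are fixed only $x$ remains free and must satisfy $x, x - a, x - b, x - c \in A$. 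A single Cauchy--Schwarz then splits this into the sum of the squared intersection sizes, which is $E_4^+(A)$ by \eqref{quad}, times the sum of squared $P$-weights, which factors across $c$ as
$$\sum_c \Bigl(\sum_a 1_P(a) 1_P(a - c)\Bigr)^2 = \sum_c r_{P - P}(c)^2 = E^+(P).$$
This yields $\mathrm{trace}((MM^*)^2) \leq E_4^+(A)^{1/2} E^+(P)^{1/2}$, and squaring the spectral estimate gives $\|M\|_{\mathrm{op}}^8 \leq E_4^+(A) E^+(P)$.

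Combining with the test-vector lower bound gives the first inequality. For the ``plus" variant with $P \subseteq A + A$, the same scheme goes through with $\widetilde{T}_A^{1_P}$ from Remark~\ref{Pidentity} in place of $M$; because this matrix is Hermitian one can invoke $\mathrm{trace}((\widetilde{T}_A^{1_P})^4) \geq \mu_1^4$ directly, and a mirrored change of variables in the quadruple sum again produces the two factors $E_4^+(A)^{1/2}$ and $E^+(P)^{1/2}$. I expect the main obstacle to be engineering the change of variables so that the $P$-weights factor cleanly into an $E^+(P)$ while the residual $A$-constraint aligns precisely with identity \eqref{quad}; the linear-algebra inputs are both off the shelf.
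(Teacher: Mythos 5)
Your proof is correct and follows essentially the same strategy as the paper's: bound the operator norm of the $P$-weighted matrix from below by testing against $|A|^{-1/2}1_A$ and from above through the trace of the fourth power, change variables so the $P$-weights factor into $E^+(P)$ while the $A$-constraint becomes the intersection count in \eqref{quad}, and apply Cauchy--Schwarz. The only difference is that you carry out the ``minus'' case and correctly use singular values (since $T_A^{1_P}$ need not be Hermitian for general $P\subseteq A-A$), whereas the paper works out the ``plus'' case with the Hermitian matrix $\widetilde{T}_A^{1_P}$ and notes the ``minus'' case is easier; these are the same argument in mirror form.
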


The idea is to bound the trace of the fourth power of a matrix in two ways: {\em spectrally} and {\em combinatorially}. 

Lemma~\ref{energy} should be compared with \eqref{setversion}. This is a ``statistical version" of a set inequality, which the operator method is suitable for. Actually, one can modify the application of Katz--Koester given above to prove Lemma~\ref{energy} for difference sets, but it is unclear how to do so for sumsets.

We will use the following identity, valid for finite $P$, \begin{equation}\label{energyP} \sum_{\alpha , \beta , \gamma} 1_P(\alpha) 1_P(\alpha - \beta) 1_P(\gamma - \beta) 1_P(\gamma) = E^+(P).\end{equation}
Indeed, $$\sum_{\alpha} 1_P(\alpha) 1_P(\alpha - \beta) = r_{P-P}(\beta) , \ \ \ \sum_{\gamma} 1_P(\gamma - \beta) 1_P(\gamma) = r_{P-P}(\beta).  $$

\begin{proof}
We prove the plus statement, the minus being easier. Consider the matrix $M = \widetilde{T}_A^{1_P}$ defined by $$M(x,y) = 1_A(x) 1_A(y) P(x+y).$$  Note that $M$ is self adjoint and so all of its eigenvalues are real. By Remark~\ref{Pidentity}, we have that the absolute value of the largest eigenvalue is at least $|A|^{-1} \sum_{z \in P} r_{A+A}(z)$, and so the trace of $M^4$ is at least $$\left( |A|^{-1} \sum_{z \in P} r_{A+A}(z)\right)^4.$$
On the other hand the trace of $M^4$ is equal to \begin{equation}\label{trace2}\sum_{x,y,z,w \in A} 1_P(x + y) 1_P(y+z) 1_P (z+ w) 1_P(w+x).\end{equation}
We change variables, leaving $x$ unchanged, to $$\alpha = x + y, \beta = x - z , \gamma = x + w .$$ Thus $x \in A \cap (\alpha - A) \cap (\beta +A) \cap (  \gamma - A)$ and so we find that \eqref{trace2} is

\begin{align*} =&\sum_{\alpha, \beta, \gamma} 1_P(\alpha) 1_P(\alpha - \beta) 1_P (\gamma - \beta) 1_P(\gamma)|A \cap (\alpha - A) \cap (\beta + A) \cap (\gamma - A)| \\
\leq & \left( \sum_{\alpha, \beta, \gamma} 1_P(\alpha) 1_P(\alpha - \beta) 1_P (\gamma - \beta) 1_P(\gamma) \sum_{\alpha, \beta, \gamma} |A \cap (\alpha - A) \cap (\beta + A) \cap (\gamma - A)|^2 \right)^{1/2}  \\ 
=& E(P)^{1/2}  \left( \sum_{\alpha, \beta, \gamma} |A \cap (\alpha - A) \cap (\beta + A) \cap (\gamma - A)|^2 \right)^{1/2} \end{align*} using Cauchy--Schwarz and then \eqref{energyP}. Now we want to use \eqref{quad}, but there is a sign issue. It turns out we still can. In a similar manner to Lemma~\ref{lem1}, recalling the definition in \eqref{translate}, we have 
\begin{align*} & \sum_{\alpha, \beta, \gamma} |A \cap (\alpha - A) \cap (\beta + A) \cap (\gamma - A)|^2 = \sum_{\beta ,\alpha , \gamma} |A_{\beta} \cap (-A_{\gamma - \alpha} + \alpha)|^2 \\ &=
\sum_{\beta, w} E^+(A_{\beta} , - A_{w}) =  \sum_{ \beta , w} E^+( A_{\beta} , A_w) \\ &   = \sum_{\alpha , \beta , \gamma} |A \cap (A + \alpha) \cap (A + \beta) \cap (A + \gamma)|^2 \end{align*}
Thus by \eqref{quad}, we find that \eqref{trace2} is $\leq E_4^+(A)^{1/2}E^+(P)^{1/2}$, as desired.
\end{proof}

\begin{remark} In the sum--product problem over $\mathbb{R}$, third order energies are the object of study. In this case, it would be advantageous to modify Lemma~\ref{energy} by taking a third power of $M$. One is presented with the difficulty that the third powers of the eigenvalues can be negative and one cannot bound the trace of $M^3$ from below by the cube of the largest eigenvalue, as in \eqref{trace}. This makes the analysis harder (see \cite[Theorem 11]{Sh4}).
\end{remark}

We are now ready to prove Theorem~\ref{main}. Before we do so, let us reevaluate our position after Lemma~\ref{energy}. Taking $P = A+A$ in Lemma~\ref{energy}, and using \eqref{triv} we find that $$|A|^4 d_4^+(A)^{-1} \leq E^+(P) \leq |A+A|^3,$$ where we used the trivial bound $E^+(P) \leq |P|^3$ in the last inequality. Simplifying, we are lead back to \eqref{CS4}. Any nontrivial improvement over this trivial bound will give us an improvement over \eqref{CS4}. There is not a unique way to proceed here, as we saw in Section~\ref{elementary}.  We are interested in the best quantitative bounds so we present the following argument. The idea is to write $P +P= (A+A) + P = A + (A+P)$, and then bound this by taking $B = A+P$ in Definition~\ref{dplus}. We then bound the size of $A+P$ by using Definition~\ref{dplus} again with $B = P$. 
\begin{proof}[Proof of Theorem~\ref{main2}:]
We only prove the plus version, the minus version following in a similar manner. Let $$P = \{x \in A+A : r_{A+A}(x) \geq 2^{-1} |A|^2 |A+A|^{-1}\}.$$ Thus $$\sum_{x\in P} r_{A+A}(x) \geq |A|^2 / 2.$$We apply Lemma~\ref{energy} and \eqref{triv} to obtain $$|A|^4 d_4^+(A)^{-1} \ll E^+(P).$$
By our choice of $P$, we have $$|A|^2 |A+A|^{-1}r_{P+P}(x) \ll  r_{A+A+P}(x),$$ and so 
\begin{equation}\label{inter2}E^+(P) \ll \frac{|A+A|^2}{|A|^4} \sum_x r_{A+A + P}(x)^2.\end{equation}
By a dyadic decomposition, we find a $\Delta \geq 1$ such that
$$ \sum_x r_{A+A + P}(x)^2 \lesssim \Delta^2 \sum_x r_{A+T}(x)^2 ,  \ \ \ T = \{x : \Delta \leq r_{A+P}(x)  \leq 2 \Delta \} . $$
Thus \eqref{inter2} is $$\ll  \frac{|A+A|^2}{|A|^4}\Delta^2 \sum_x r_{A+T}(x)^2 \leq \frac{|A+A|^2}{|A|^4}\Delta^2 E_4^+(A,B)^{1/3} |A|^{2/3} |T|^{2/3} ,$$ by H\"{o}lder's inequality as in \eqref{holder}. By Definition~\ref{dplus}, this is $$\ll \frac{|A+A|^2}{|A|^4}\Delta^2 d_4^+(A)^{1/3} |A| |T|^{5/3} = \frac{|A+A|^2}{|A|^4}d_4^+(A)^{1/3} |A| (\Delta |T|)^{14/9} (\Delta^4 |T|)^{1/9}.$$
Now from the definition of $T$, we have that $$|T| \Delta^4 \leq \sum_x r_{A +P}(x)^4 \leq d_4^+(A) |A| |P|^3, \ \ |T| \Delta \leq \sum_x r_{A+P}(x) \leq |A||P|\,, $$ and so putting this altogether we obtain $$|A|^4 d_4^+(A)^{-1} \ll E^+(P) \lesssim |A+A|^2 d_4^+(A)^{4/9} |A|^{-4/3} |P|^{17/9}.$$ Using $P \subset A+A$ and simplifying gives Theorem~\ref{main2}.

\end{proof}

It is interesting to note that the techniques in Section~\ref{elementary} can be modified to give the bound of Theorem~\ref{main2} for difference sets. We do not see how one can do this for the sumsets.

\section*{Acknowledgements}

The first author is partially supported by NSF grant  DMS--1501982 and would
like to thank Kevin Ford for financial support. This work was completed while the first author enjoyed the hospitality of JKU Linz and was partially supported by the Austrian Science Fund, Project F5507--N26, which is part of the Special Research Program, ``Quasi--Monte Carlo Methods: Theory and Applications." The first author also thanks the participants of the Georgia Discrete analysis conference as well as the NSF--CBMS Conference on Additive Combinatorics from a Geometric Viewpoint for enlightening discussions. The authors are particularly grateful to Sophie Stevens. The authors also thank Oliver Roche--Newton and Audie Warren for clarifying discussions.


\begin{thebibliography}{99}

\bibitem[BaWo]{BW} A. Balog and T.  Wooley, \emph{A low--energy decomposition theorem, } Q. J. Math. 68.1 (2017): 207--226. 

\bibitem[Bo]{Bo3} J. Bourgain. Multilinear exponential sums in prime fields under optimal entropy condition on the sources. Geom. Funct. Anal., 18(5):1477--1502, 2009.

\bibitem[BDFKK]{Bo} J. Bourgain, S. J. Dilworth, K. Ford, S. Konyagin, and D. Kutzarova. Explicit constructions of RIP matrices and related problems. Duke Math. J., 159(1):145?185, 2011.

\bibitem[BDFKK2]{Bo2} J. Bourgain, S. J. Dilworth, K. Ford, S. Konyagin, and D. Kutzarova. Breaking the $k^2$ Barrier for Explicit RIP Matrices. In Proceedings of the Symposium on Theory of Computing (STOC), 2011.

\bibitem[BoGa]{BG} J. Bourgain, M. Garaev, \emph{On a variant of sum--product estimates and explicit exponential sum bounds in prime fields, } Math. Proc. Cambridge Philos. Soc. 146 (2009), no. 1, 1--21.

\bibitem[BGK]{BGK} Bourgain, J., Glibichuk, A., Konyagin, S., Estimate for the number of sums and products and for exponential sums in fields of prime order. J. Lond. Math. Soc. 73, 380--398 (2006).


\bibitem[BKT]{BKT} J. Bourgain, N. Katz, T. Tao. \emph{A sum--product estimate in finite fields, and applications, } Geom. Funct. Anal. 14 (2004), 27--57.

\bibitem[El]{El} G. Elekes, \emph{On the number of sums and products, } Acta Arith. 81 (1997) 365--367.

\bibitem[ErSz]{ES} P. Erd\H{o}s, E. Szemer\'{e}di, \emph{On sums and products of integers, } Studies in pure mathematics, 213--218, Birkḧauser, Basel, 1983.

\bibitem[Ga]{Ga} M.Z. Garaev, \emph{ An explicit sum--product estimate in $\F_p$, } Intern. Math. Res. Notices (2007), no 11, 1--11.

\bibitem[Ga2]{Ga2} M. Garaev, The sum--product estimate for large subsets of prime fields, Proc. Amer. Math. Soc. 136 (2008), no. 8, 2735--2739.


\bibitem[HoJo]{HJ} R.Horn, C.Johnson, 
{\em Matrix Analysis, }
Cambridge University Press, Cambridge, 1985, xiii+561 pp.


\bibitem[Jo]{Jo} T. Jones, \emph{An improved incidence bound for fields of prime order, } European Journal of
Combinatorics 52 (2016), 136--145.

\bibitem[KaKo]{KK} N. H. Katz and P. Koester, \emph{On additive doubling and energy, } SIAM Journal on Discrete Mathematics 24 (2010), 1684--1693.

\bibitem[KaSh]{KaSh} N. Katz, C. Shen, \emph{A slight improvement to Garaev's sum product estimate, } Proc. Amer. Math. Soc. 136 (2008), 2499--2504.

\bibitem[KoSh1]{KS} S.V. Konyagin and I.D. Shkredov, \emph{On sum sets of sets, having small product set, } Proc. Steklov Inst. Math. 290 (2015), 288--299. 

\bibitem[KoSh2]{KS2} S.V. Konyagin and I.D. Shkredov, \emph{New results on sum--products in R, } Proc. Steklov Inst. Math. 294 (2016), 87--98. 


\bibitem[MuPe]{MP} B. Murphy, G. Petridis, A Second Wave of Expanders in Finite Fields, arXiv:1701.01635v1 (2017).

\bibitem[MRSS]{MRSS} B. Murphy, M. Rudnev, I.D. Shkredov, Y.N. Shteinikov, \emph{On the few products, many sums problem, } arXiv: 1712.0041v1 (2017). 



\bibitem[RRS]{RRS} O. Roche--Newton, M. Rudnev, and I. D. Shkredov, \emph{New sum-product type estimates over finite fields, } Adv. Math., (2016) 293: 589--605.

\bibitem[RSS]{RSS} M. Rudnev, I.D. Shkredov, S. Stevens, \emph{On an energy variant of the sum--product conjecture, } arXiv: 1607.05053 (2016).

\bibitem[Ru1]{Ru1} M. Rudnev, \emph{An improved sum--product inequality in fields of prime order, } Int. Math. Res. Not. IMRN (2012), no. 16, 3693--3705.

\bibitem[Ru2]{Ru} M. Rudnev, \emph{On the number of incidences between planes and points in three dimensions, }
Combinatorica (2018), Volume 38, Issue 1, pp 219--254; doi:10.1007/s00493-016-3329-6.

\bibitem[RuSh]{RuSh} M. Rudnev, I. D. Shkredov, On the restriction problem for discrete paraboloid in lower dimension, arXiv:1803.11035 (2018).

\bibitem[ScSh1]{SS} T. Schoen, I.D. Shkredov, \emph{On sumsets of convex sets, } Comb. Probab. Comput. 20 (2011) 793--798. 


\bibitem[ScSh2]{SS2} T. Schoen, I.D. Shkredov, 
\emph{Higher moments of convolutions, }
J. Number Theory 133 (2013), no. 5, 1693--1737.

\bibitem[Sha]{Sha} G. Shakan, \emph{On higher energy decomposition and the sum--product phenomenon, } To appear in Q. J. Math. arXiv:1803.04637 (2018). 

\bibitem[Sh1]{Sh1} I.D. Shkredov, \emph{Some new results on higher energies, }
Transactions of MMS, 74:1 (2013), 35--73.

\bibitem[Sh2]{Sh2} I.D. Shkredov, \emph{Energies and structure of additive sets, }
Electronic Journal of Combinatorics, 21(3) (2014), \#P3.44, 1--53.

\bibitem[Sh3]{Sh4} I.D. Shkredov, \emph{On sums of Szemer\'edi--Trotter sets, } Proc. Steklov Inst. Math. 289:1 (2015), 300--309.


\bibitem[Shr4]{Shr3} I. D. Shkredov, \emph{ Some remarks on the Balog--Wooley decomposition theorem and quantities $D^+, D^{\times}$, }
Proc. Steklov Inst. Math., 298, suppl. 1 (2017), 74--90; arXiv:1605.00266v1 (2016).

\bibitem[Shr5]{Shr5} I. D. Shkredov, On asymptotic formulae in some sum--product questions,
arXiv:1802.09066v2 (2018).

\bibitem[ShVy]{SV} I.D. Shkredov, I.V. Vyugin, 
\emph{On additive shifts of multiplicative subgroups, }
Sb. Math. 203 (2012), no. 5-6, 844--863.


\bibitem[So]{So} J. Solymosi, \emph{Bounding multiplicative energy by the sumset, } Adv. Math. 222 (2), 402--408 (2009).

\bibitem[StZe]{SZ} S. Stevens, F. De Zeeuw, \emph{An improved point--line incidence bound over arbitrary fields, } Bull. London Math. Soc. 49, 842--858, 2017.

\bibitem[SzTr]{ST}E. Szemer\'edi and W. T. Trotter, \emph{Extremal problems in discrete geometry, } Combinatorica 3 (1983), 381--392.


\bibitem[TV]{TV} T. Tao, V. Vu, \emph{ Additive combinatorics, } Cambridge University Press · Hardback, 530 pages (ISBN--13: 9780521853866; ISBN--10: 0521853869). Paperback, 512 pages.



\end{thebibliography}
\end{document}